\def\N{\mathbb N}
\def\N0{\mathbb N_0}
\def\real{\mathbb R}
\def\complex{\mathbb C}
\def\graph{{\mathcal G}}
\def\vertexset{{\mathcal V}}
\def\edgeset{{\mathcal E}}
\def\tree{{\mathcal T}}
\def\edgelength{l}
\def\vertspace{\mathbb H}
\def\espace{\mathbb S}
\def\dop{{\mathcal L}}
\def\domain{{\mathcal D}}
\def\ft{{\mathcal F}}
\title{\bf The Continuous Graph FFT}
\author{Robert Carlson \\
Department of Mathematics \\ 
University of Colorado at Colorado Springs \\
rcarlson@uccs.edu}
\newtheorem{thm}{Theorem}[section]
\newtheorem{lem}[thm]{Lemma}
\newtheorem{prop}[thm]{Proposition}
\theoremstyle{definition}
\theoremstyle{remark}
\newcommand{\thmref}[1]{Theorem~\ref{#1}}
\newcommand{\lemref}[1]{Lemma~\ref{#1}}
\newcommand{\propref}[1]{Proposition~\ref{#1}}
 \numberwithin{equation}{section}
\begin{document}

\maketitle

\date{}

\begin{abstract}
The discrete Fourier transform and the FFT algorithm are extended from 
the circle to continuous graphs with equal edge lengths.
\end{abstract}

\vskip 25pt

2000 Mathematics Subject Classification 65T50, 34B45

\vskip 25pt

\vspace{1cm} {\underline{Acknowledgements.}\newline

This work was partly supported by Grant UKM2-2811-OD-06 of the U.S. Civilian
Research and Development Foundation.

\newpage

\section{Introduction}

The discrete Fourier transform (DFT) and algorithms for its efficient computation (FFT)
enjoy an enormous range of applications.  One might roughly divide such applications into the
analysis of data which is sampled in time or in space.  
Applications involving spatially sampled data are basic 
in numerical analysis, since constant coefficient
partial differential equations and some of their discrete analogs,
including the heat, wave, Schrodinger, and beam equations in one space dimension,
have exact solutions in Fourier series.  
Other applications of the DFT involving the
analysis of spatially sampled data include noise removal, data compression,     
data interpolation, and approximation of functions.
These applications often take advantage of
the tight and explicit linkage binding the harmonic analysis of uniformly sampled data  
with the harmonic analysis of periodic functions defined on $\real $.

Many aspects of Fourier analysis can be developed through the spectral theory of the second
derivative operator $-D^2 = -d^2/dx^2$ acting on the Hilbert space of square integrable functions
with period $1$.  An orthogonal basis of eigenfunctions is given by $\exp (2\pi ikx)$.    
If $N$ uniformly spaced samples $x_n = n/N$ for $n = 0,\dots ,N-1$ are considered,
the restriction of the exponential functions to the sample points 
\[ \exp (2\pi ikn/N) , \quad k,n = 0,\dots ,N-1, \]
again provides an orthogonal basis for functions on the set $\{ x_n \}$.
These sampled exponential functions are also eigenfunctions for 'local' operators
such as $\Delta _N $, which acts by
\[ \Delta _N f(x_n) = f(x_n) - \frac{1}{2}[f(x_{n-1} + f(x_{n+1})],\]
index arithmetic being carried out modulo $N$.
The fact that efficient FFT algorithms are available for sampled data is not a consequence 
of abstract spectral theory, but
relies on the exponential form of the eigenfunctions and the arithmetic progression
of the frequencies $k$.  

Our discussion now shifts to graphs and their refinements, with the circle and its
uniform sampling serving as an example.  Discrete graphs with $N_{\vertexset} < \infty $ 
vertices and $N_{\edgeset}$ edges have a well developed spectral theory \cite{Chung} based on the 
adjacency or Laplace operators acting on functions defined on the vertex set $\vertexset$.
While these ideas play a role here, the lead actor is a continuous graph, also known as
a metric graph or, when a differential operator is emphasized, a quantum graph.  
Here the edges of a graph are identified with intervals $[a_n,b_n] \subset \real$,
functions are elements of the Hilbert space $\oplus _n L^2[a_n,b_n]$, and differential operators
such as $-D^2$ provide a basis for both physical modeling and harmonic analysis.   
(The terminology for such 'continuous graphs' is not settled, with 
the terms topological graphs or networks also used.)   

The natural and technological worlds offer numerous opportunities for graph modeling,
where applications such as those mentioned above can be considered with 
data sampled from continuous graphs.  These include the sampling of populations along river systems,
and the description of traffic density on road networks.  Biological systems
transport nutrients, waste, heat, and pressure waves through vascular networks, and
electrochemical signals through natural neural networks.  Elaborate networks are   
manufactured in microelectronics, and for microfluidic laboratories on chips.

Despite a rapidly growing literature on 'quantum graphs' \cite{Collect3,Collect1,Collect2}, 
and some work \cite{Baker} generalizing the classical 
theory of Fourier series to functions defined on a continuous graph, 
there is a very limited literature considering the harmonic analysis of sampled data
for these geometric objects.  The papers \cite{Pesenson1, Pesenson2} consider  
sampling on continuous graphs with virtually no length restrictions on the edges.
In contrast, this work exploits the features appearing when graphs have equal length
edges, and obtains the following conclusion:  
every finite continuous graph with edges of equal length
admits a family of DFTs closely analogous to that of the circle, and an FFT algorithm
for their efficient computation.

To unify the presentation a bit, we focus on simple discrete graphs, 
without loops or multiple edges between vertices, whose vertices have degree at least two.  
Given a continuous version of a nonsimple graph without boundary vertices, 
one can insert additional vertices of degree $2$ to reduce to the simple case.  
This insertion of 'invisible' vertices can also be used to reduce graphs whose edge lengths are integer multiples
of a common value to the equal length case.  Algorithmically, this
reduction increases the size of the discrete graph spectral problem whose solution 
is an important component of the DFT.  

There are four subsequent sections in the paper.  The second section starts with a review
of differential operators on continuous graphs.
This leads to a more detailed discussion of the spectral theory of the standard Laplace
differential operator on continuous graphs with equal length edges.  
Most of the material in this section was previously known \cite{Below, Friedman}, 
although Theorem 2.9 appears to be new.
The third section explores the linked spectral theory of Laplacians for continuous graphs 
and their uniformly sampled subgraphs.  The fourth section shows that efficient algorithms
for Fourier analysis are available.  The final section presents a simple example 
where many of the computations can be done 'by hand'.

\section{Spectra of continuous graph Laplacians}

\subsection{Continuous graphs and the standard Laplacian}

In this work a discrete graph $\graph $ will be finite and simple,
with a vertexset $\vertexset$ having $N_{\vertexset}$ points, and 
a set $\edgeset$ of $N_{\edgeset}$ edges.  
Each edge $e$ has a positive length $\edgelength _e$.
It is convenient to number the edges.  For $n = 1,\dots ,N_{\edgeset}$
the edge $e_n$ is identified with a real interval $[a_n,b_n]$ of length $\edgelength _n$.
The resulting topological graph is also denoted $\graph $.
In an obvious fashion one may extend the standard metric and Lebesgue measure 
from the edges to $\graph $.

The identification of graph edges and intervals allows us to define
the Hilbert space 
\[ L^2(\graph ) = \oplus_{n=1}^{N_{\edgeset}} L^2[a_n,b_n] \]  
with the inner product
\[ \langle f, g \rangle  = \int_{\graph } f\overline g
= \sum_n \int_{a_n}^{b_n} f_n(x)\overline{g_n(x)} \ dx , 
\quad f_n:[a_n,b_n] \to \complex . \]
We will employ the standard vertex conditions, requiring continuity at the vertices,
\begin{equation} \label{continuity}
\lim_{x \in e(i) \to v}f(x) = \lim_{x \in e(j) \to v}f(x), 
\quad e(i),e(j) \sim v,
\end{equation}
and the derivative condition
\begin{equation} \label{derive}
\sum_{e_n \sim v} \partial _{\nu} f_n(v) = 0.
\end{equation}
Here the derivative $\partial _{\nu} f_n(v) $ 
is $f_n'(a_n)$, respectively $-f_n'(b_n)$ if $a_n$, respectively $b_n$,
is identified with $v$.

The standard vertex conditions are used to define
the Laplacian $\dop f = -f''$ 
of the continuous graph $\graph $. 
Let $\domain _{max}$ denote the set of functions $f\in L^2(\graph )$ 
with $f'$ absolutely continuous on each $e_n$, and $f'' \in  L^2(\graph )$.  
The domain $\domain $ of $\dop $ is then the set of 
$f \in \domain _{max}$ satisfying the standard conditions \eqref{continuity}
and \eqref{derive}.  From the classical theory of ordinary differential operators 
\cite[p. S123]{Kuchment04} one knows that $\dop $ is self adjoint
with compact resolvent.  This operator is nonnegative, with $0$ being a simple eigenvalue
if $\graph $ is connected.
Writing eigenvalues with multiplicity, the spectrum is thus a sequence 
$0 = \lambda _0 < \lambda _1 \le \lambda _2 \le \dots $, and there is a orthonormal
basis of eigenfunctions. 

We now impose the additional requirement that all edges of $\graph $ have equal length.
In this situation the operator $\dop $ will be denoted $\Delta _{\infty}$.
Initially the edge lengths are taken to be $1$, but the rescaling $\xi = x/L$ converts the system
$D_x^2 Y = \lambda Y$ to
\begin{equation} \label{rescale}
D_{\xi}^2 Y = \mu Y, \quad 0 \le \xi \le 1 , \quad \mu =  L^2\lambda .
\end{equation}
Values of eigenfunctions at graph vertices are not effected, and derivatives
are scaled by $L$.  Thus eigenfunctions with eigenvalue $\lambda $ 
for the standard Laplacian 
on the graph with edge lengths $L$
are taken to eigenfunctions with eigenvalue $L^2 \lambda $ 
for the standard Laplacian on the graph with edge lengths $1$.

The spectrum of $\Delta _{\infty}$ is tightly linked to the discrete graph $\graph $
and the spectrum of the discrete Laplacian of combinatorial graph theory.  
Let $E(\lambda )$ denote the eigenspace of $\Delta _{\infty }$ for the eigenvalue
$\lambda $.  The connections between $E(\lambda )$ and the discrete graph $\graph $
arise in two ways, depending on whether
or not $\lambda \in \{ n^2 \pi ^2 \ | \ n = 1,2,3,\dots \} $.  
Before exploring this dichotomy, we note the following result.

\begin{prop} \label{specper}
If $\omega > 0$, 
\[ \dim E(\omega ^2) = \dim E([\omega + 2n\pi ]^2), \quad n  = 1,2,3,\dots . \] 
\end{prop}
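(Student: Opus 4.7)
The plan is to parameterize $E(\omega^2)$ edge by edge and show that the linear conditions cutting out eigenfunctions depend on $\omega$ only through $\cos\omega$ and $\sin\omega$, both of which are $2\pi$-periodic. On each edge $e_n$ of length $1$, every solution of $-f_n'' = \omega^2 f_n$ has the form $f_n(x) = A_n\cos(\omega(x-a_n)) + B_n\sin(\omega(x-a_n))$, so the full solution space of the edgewise ODE is $\complex^{2N_{\edgeset}}$; the eigenspace $E(\omega^2)$ is the subspace cut out by the continuity \eqref{continuity} and Kirchhoff \eqref{derive} conditions at each vertex. I would split the analysis on whether $\sin\omega = 0$.

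\textbf{Generic case $\sin\omega \ne 0$.} Every eigenfunction $f$ is determined by its vertex values $F(v) = f(v)$ (well-defined by continuity): the equations $A_n = F(a_n)$ and $A_n\cos\omega + B_n\sin\omega = F(b_n)$ recover $(A_n,B_n)$. A direct computation yields
\begin{equation*}
\partial_\nu f_n(v) = \frac{\omega}{\sin\omega}\bigl(F(v') - F(v)\cos\omega\bigr),
\end{equation*}
where $v'$ is the other endpoint of $e_n$. The Kirchhoff condition then collapses to the discrete eigenvalue problem
\begin{equation*}
\sum_{v' \sim v} F(v') = (\deg v)(\cos\omega)\,F(v), \qquad v \in \vertexset.
\end{equation*}
This establishes a linear isomorphism between $E(\omega^2)$ and $\ker(A - (\cos\omega)D)$ on $\complex^{\vertexset}$, where $A$ is the adjacency operator and $D$ the degree operator. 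Since $\cos(\omega + 2\pi n) = \cos\omega$, this kernel is unchanged under the shift $\omega \mapsto \omega + 2\pi n$.

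\textbf{Exceptional case $\omega = k\pi$.} Here $\sin\omega = 0$ and the vertex-value parameterization degenerates, but the conditions actually decouple. Since $f_n(a_n) = A_n$ and $f_n(b_n) = (-1)^k A_n$, the continuity requirement at $v$ becomes a linear condition on the $A_n$'s alone, with coefficients $\pm 1$ depending only on $(-1)^k$. Independently, the outward derivatives reduce to $\pm k\pi B_n$ with a sign pattern depending only on $(-1)^k$, so the Kirchhoff condition becomes a linear condition on the $B_n$'s alone, again involving $\omega$ only through $(-1)^k$. Thus $E(\omega^2)$ decomposes into an $A$-part and a $B$-part of a purely combinatorial character, whose dimensions depend on $k$ only through its parity. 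Since $(-1)^{k+2n} = (-1)^k$, the same dimensions arise for the eigenvalue $((k+2n)\pi)^2$, completing the argument.

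The main obstacle is the case $\omega \in \pi\Z$, where the clean reduction to a discrete spectral problem on $\complex^{\vertexset}$ fails and one must reanalyze in terms of the raw edge coefficients $(A_n, B_n)$; the compensating payoff is the clean decoupling into $A$ and $B$ blocks, both of which manifestly depend on $\omega$ only modulo $2\pi$.
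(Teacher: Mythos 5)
Your argument is correct, but it takes a genuinely different route from the paper. The paper's proof is a short direct construction: writing $y = A\cos(\omega x) + B\sin(\omega x)$ on each edge, it maps $y$ to $y_1 = A\cos([\omega+2n\pi]x) + B\sin([\omega+2n\pi]x)$, observes that vertex values are unchanged (so continuity is preserved) and that all endpoint derivatives are multiplied by the single nonzero constant $(\omega+2n\pi)/\omega$ (so the Kirchhoff sums remain zero), and concludes by injectivity of this map and of its counterpart in the other direction. This avoids any case split on $\sin\omega$. Your proof instead characterizes $E(\omega^2)$ as the solution set of linear conditions whose coefficients depend on $\omega$ only through $\cos\omega$, $\sin\omega$ and the nonvanishing factor $\omega/\sin\omega$ (or $k\pi$), which forces the dichotomy $\sin\omega \ne 0$ versus $\omega \in \pi\Z$. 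What you get in exchange is more structural information: your generic case is essentially the paper's later Theorem~\ref{combspec} (the reduction to the discrete eigenvalue problem for $T^{-1}A$), and your exceptional case anticipates the combinatorial description of the eigenspaces at $n^2\pi^2$ in Theorems~\ref{zcnt} and~\ref{zcnt2}. Two small points worth tightening if you keep your version: in the generic case, state explicitly why the vertex-value map is onto the discrete eigenspace (given $F$ solving the discrete problem, the extension formula produces a continuous function whose Kirchhoff sums vanish because $\omega/\sin\omega \ne 0$ makes the reduction reversible); and in the exceptional case, note that $k \ge 1$ so that dividing the derivative conditions by $k\pi$ is legitimate. The paper's two-line bijection is the more economical proof of the bare dimension statement; yours is closer to a blueprint for the rest of Section~2.
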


\begin{proof}
Suppose $n \ge 1$ and $Y$ is an eigenfunction with eigenvalue $\omega ^2$. 
On each edge
\[y(x) = A\cos(\omega x) + B \sin(\omega x).\]
The function 
\[y_1(x) = A\cos([\omega + 2n \pi] x) + B \sin([\omega + 2n \pi ] x)\]
then satisfies the equation,
\[-y_1'' = [\omega + 2n\pi ]^2y_1,\]
and at the endpoints of the edge we have
\[y(0) = y_1(0), \quad y(1) = y_1(1). \]
The endpoint derivatives are 
\[y'(0) = \omega  B, \quad y'(1) = \omega [B \cos (\omega ) - A \sin(\omega )] \]
and
\[y_1'(0) = [\omega + 2n\pi ] B, \quad 
y_1'(1) = [\omega + 2n\pi ] [B \cos (\omega ) - A \sin(\omega )] , \]
so
\[ y_1'(0) = \frac{\omega + 2n\pi }{\omega }y'(0) , 
\quad y_1'(1) = \frac{\omega + 2n\pi }{\omega }y'(1).\]
 
Thus $y_1$ satisfies the same interior vertex conditions that $y$ does.  
Moreover, the linear map taking $y \to y_1$ is one-to-one, as is
the analogous map from $E([\omega + 2n\pi ]^2)$ to $E(\omega ^2)$. 

\end{proof}

\subsection{Role of the discrete Laplacian}

Given a vertex $v \in \graph $, let $u_1,\dots ,u_{deg (v)}$ be the vertices adjacent to $v$.
A discrete graph carries a number of linear operators acting on 
the vertex space $\vertspace $ of functions $f:\vertexset \to \complex $,
including the adjacency operator  
\[Af(v) = \sum_{i=1}^{deg(v)} f(u_i),\]
and the degree operator  
\[Tf(v) = deg(v) f(v).\]
Define the operator $\Delta _1$ by 
\[\Delta _1 f(v) = f(v) - T^{-1} Af(v) .\]
$\Delta _1$ is similar to the much studied Laplacian \cite[p. 3]{Chung}, 
\[I - T^{-1/2} AT^{-1/2} .\]

The distinction of the cases $\lambda \in \{ n^2 \pi ^2 \} $ is related to the following
fact.

\begin{lem} \label{endmap}
Fix $\lambda \in \complex$, and consider the vector space of solutions of $-y'' = \lambda y$ 
on the interval $[0,1]$.  The linear function taking $y(x)$ to $(y(0),y(1))$ is an isomorphism  
if and only if $\lambda \notin \{ n^2 \pi ^2 \ | \ n = 1,2,3,\dots \} $.
\end{lem}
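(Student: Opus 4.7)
The plan is to exploit the fact that the solution space of $-y''=\lambda y$ on $[0,1]$ is two-dimensional for every $\lambda \in \complex$, so the map $y \mapsto (y(0),y(1))$ is a linear map between two vector spaces of the same finite dimension. Hence it suffices to determine when this map is injective, i.e.\ when the only solution vanishing at both endpoints is the zero solution.

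I would split into cases according to whether $\lambda = 0$ or $\lambda \neq 0$. When $\lambda = 0$, the general solution is $y(x)=A+Bx$, and $y(0)=y(1)=0$ immediately forces $A=B=0$, so the map is an isomorphism. When $\lambda \neq 0$, I would choose $\omega \in \complex \setminus \{0\}$ with $\omega^2 = \lambda$ and write the general solution as $y(x) = A\cos(\omega x) + B\sin(\omega x)$. The condition $y(0)=0$ gives $A=0$, and then $y(1)=B\sin(\omega)=0$ admits a nonzero $B$ precisely when $\sin(\omega)=0$.

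Since the zeros of $\sin$ in $\complex$ are exactly the integer multiples of $\pi$, the kernel is nontrivial iff $\omega = n\pi$ for some nonzero integer $n$, i.e.\ iff $\lambda = \omega^2 = n^2\pi^2$ with $n = 1,2,3,\dots$. In all other cases the kernel is trivial and the map is an isomorphism, which is exactly the claimed equivalence.

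There isn't really a hard step: the only subtlety worth flagging is that $\lambda$ is allowed to be complex, so one must justify using the complex zero set of $\sin$ rather than only the real one. This is a standard fact, and once it is invoked the proof reduces to the elementary computation above.
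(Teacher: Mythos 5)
Your proof is correct, but it runs in the opposite direction from the paper's. You argue via injectivity plus a dimension count: the solution space and the target $\complex^2$ are both two-dimensional, so it suffices to show the kernel of $y\mapsto(y(0),y(1))$ is trivial exactly when $\lambda\notin\{n^2\pi^2\}$, which you do by solving $y(0)=y(1)=0$ explicitly and invoking the complex zero set of $\sin$. The paper instead proves surjectivity directly by exhibiting the explicit interpolation formula
\[
y(x,\lambda ) = y(0)\cos(\omega x)
+ [y(1) - y(0)\cos(\omega )]\frac{\sin (\omega x)}{\sin(\omega )},
\]
and for $\lambda=n^2\pi^2$ observes that $y(0)=0$ forces $y(1)=0$. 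The two arguments are of comparable difficulty, and yours has the minor advantage of treating $\lambda=0$ explicitly (the paper's formula needs the convention $\sin(\omega x)/\sin(\omega)\to x$ as $\omega\to 0$, which is left implicit). What the paper's route buys is the formula itself: it is reused immediately in the proof of Theorem 2.4 to compute the endpoint derivatives $y_i'(v)$, and again in the converse direction of Theorem 2.5 to extend vertex data to an eigenfunction, so constructing the explicit inverse here is not wasted effort. Your flag about using the complex zeros of $\sin$ rather than only the real ones is exactly the right point to make explicit, since the lemma is stated for all $\lambda\in\complex$.
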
  

\begin{proof}
For $\lambda \notin \{ n^2 \pi ^2 \} $, the formula
\begin{equation} \label{eform}
y(x,\lambda ) = y(0)\cos(\omega x) 
+ [y(1) - y(0)\cos(\omega )]\frac{\sin (\omega x)}{\sin(\omega )}
\end{equation}
shows that the map is surjective.  
On the other hand, if $\lambda \in \{ n^2 \pi ^2 \} $, then $y(0) = 0$
implies $y(1) = 0$, since $y(x) = B\sin (n\pi x)$.

\end{proof}

As an immediate consequence we have the following result for graphs.

\begin{lem} \label{vanishing}
Suppose the edges of $\graph $ have length $1$, and $\lambda \notin \{ n^2 \pi ^2 \ | \ n = 1,2,3,\dots \} $.
Let $y: \graph \to \complex $ be continuous, and satisfy $-y'' = \lambda y$ on the edges.
If $y(v) = 0$ at all vertices of $\graph $, then $y(x) = 0$ for all $x \in \graph $. 
\end{lem}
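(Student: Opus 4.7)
The plan is to argue edge by edge, reducing the claim on $\graph$ to the elementary statement on a single interval that was just established in \lemref{endmap}.

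First I would fix an arbitrary edge $e_n$ of $\graph$, identified with $[a_n,b_n]$ of length $1$. The restriction $y_n := y|_{e_n}$ is, by hypothesis, a solution of $-y_n'' = \lambda y_n$ on $[a_n,b_n]$, and since $a_n$ and $b_n$ are vertices of $\graph$ at which $y$ is assumed to vanish, we have $y_n(a_n) = y_n(b_n) = 0$.

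Now I would invoke \lemref{endmap}: because $\lambda \notin \{n^2\pi^2 \mid n = 1,2,3,\dots\}$, the evaluation map $y_n \mapsto (y_n(a_n), y_n(b_n))$ is an isomorphism from the two-dimensional solution space of $-y_n'' = \lambda y_n$ onto $\complex^2$. In particular it is injective, so the vanishing of both endpoint values forces $y_n \equiv 0$ on $[a_n,b_n]$. Since $n$ was arbitrary, $y$ vanishes identically on every edge and hence on all of $\graph$.

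There is no genuine obstacle here; the lemma is essentially a packaging of \lemref{endmap} for graphs. The only thing worth being careful about is the hypothesis $\lambda \notin \{n^2\pi^2\}$, which is precisely what prevents the alternative scenario $y_n(x) = B\sin(n\pi x)$ in which both endpoint values vanish without $y_n$ itself vanishing; the continuity hypothesis on $y$ plays no role beyond ensuring that the value of $y$ at each vertex is well-defined.
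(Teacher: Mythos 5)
Your proof is correct and is precisely the argument the paper has in mind: the paper states this lemma as ``an immediate consequence'' of \lemref{endmap} and gives no further proof, and your edge-by-edge reduction to the injectivity of the endpoint-evaluation map is exactly that consequence.
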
  

\begin{thm} \label{ceqn}
Suppose $\lambda \notin \{ n^2 \pi ^2 \ | \ n = 1,2,3,\dots \} $
and $y$ is an eigenfunction for $\Delta _{\infty}$.
If $v$ has adjacent vertices $u_1,\dots ,u_{\deg (v)}$, then
\begin{equation} \label{combeqn}
\cos(\omega )y(v) = \frac{1}{\deg (v)} \sum_{i=1}^{\deg (v)} y(u_i). 
\end{equation}
\end{thm}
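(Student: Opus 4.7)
The plan is to use Lemma \ref{endmap} on each edge incident to $v$ to write the eigenfunction explicitly in terms of the vertex values $y(v)$ and $y(u_i)$, then apply the derivative vertex condition \eqref{derive} at $v$ to extract the claimed combinatorial relation.

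First, I would fix a vertex $v$ and, for each edge $e_i$ joining $v$ to its neighbor $u_i$, reparametrize $e_i$ as $[0,1]$ with $0$ corresponding to $v$ and $1$ to $u_i$. Since $\lambda \notin \{n^2\pi^2\}$, Lemma \ref{endmap} applies, and formula \eqref{eform} gives
\begin{equation*}
y_i(x) = y(v)\cos(\omega x) + [y(u_i) - y(v)\cos(\omega)]\frac{\sin(\omega x)}{\sin(\omega)}
\end{equation*}
on that edge. Differentiating and evaluating at $x = 0$ yields the inward derivative
\begin{equation*}
\partial_\nu y_i(v) = y_i'(0) = \frac{\omega}{\sin(\omega)}\bigl[y(u_i) - y(v)\cos(\omega)\bigr].
\end{equation*}

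Next, I would sum these expressions over the $\deg(v)$ edges meeting $v$ and invoke the derivative vertex condition \eqref{derive}, which forces the sum to vanish. Since $\omega/\sin(\omega)$ is a nonzero common factor (we are away from the excluded set), the sum inside the brackets must vanish, giving
\begin{equation*}
\sum_{i=1}^{\deg(v)} y(u_i) = \deg(v)\cos(\omega)\, y(v),
\end{equation*}
which is exactly \eqref{combeqn} after dividing by $\deg(v)$.

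The argument is essentially bookkeeping, but the one point that requires care is the parametrization: the graph is simple and $v$ has no loops, so the $\deg(v)$ edges at $v$ are in bijection with the distinct neighbors $u_1,\dots,u_{\deg(v)}$, and each contributes exactly one inward derivative term to \eqref{derive}. The only real obstacle is being consistent about orientations and sign conventions so that $\partial_\nu y_i(v)$ matches the definition in the paragraph following \eqref{derive}; once the parametrization places $v$ at the left endpoint, the required derivative is simply $y_i'(0)$ with the positive sign, and the rest is arithmetic.
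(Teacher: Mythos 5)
Your proof is correct and takes essentially the same route as the paper: represent $y$ on each incident edge via \eqref{eform} from Lemma~\ref{endmap}, then sum the derivative condition \eqref{derive} at $v$. The only difference is that you place $v$ at $x=0$ rather than at $x=1$ on each edge, which makes the final cancellation slightly cleaner (the paper instead needs $\sin^2\omega+\cos^2\omega=1$) but is otherwise the identical argument.
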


\begin{proof}
In local coordinates identifying each $u_i$ with $0$,  
\eqref{eform} for $y_i$ on the edges $e_i = (u_i,v)$ gives
\begin{equation} \label{vertexform}
 y_i'(v) = -\omega \sin(\omega )y_i(u_i) 
+ [y_i(v) - y_i(u_i)\cos(\omega )]\frac{\omega \cos(\omega )}{\sin(\omega )}.
\end{equation}
Summing over $i$, the derivative condition at $v$ then gives
\[ 0 = \sum_i y_i'(v) = 
-\omega \sin(\omega )\sum_i y_i(u_i) 
+ \frac{\omega \cos(\omega )}{\sin(\omega )}\sum_i [y_i(v) - y_i(u_i)\cos(\omega )] .\]
Using the continuity of $y$ at $v$ and elementary manipulations gives \eqref{combeqn}.
\end{proof}

\noindent \eqref{combeqn} is clearly an eigenvalue equation, with eigenvalue $\cos(\omega )$, for the linear 
operator $T^{-1}A$ acting on the space of (real or complex valued) functions on the vertex set.  
With this background established, we are ready to relate the spectra of $\Delta _1$ and $\Delta _{\infty}$.

\begin{thm} \label{combspec}
If $\lambda \notin \{ n^2 \pi ^2 \ | \ n = 0,1,2,\dots \} $,
then $\lambda $ is an eigenvalue of $\Delta _{\infty}$
if and only if $1 - \cos (\omega ) = 1 - \cos (\sqrt{\lambda } )$
is an eigenvalue of $\Delta _1$, with the same geometric multiplicity.
\end{thm}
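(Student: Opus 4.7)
The plan is to realize both eigenspaces as the image and preimage of a single restriction map, namely $\rho : y \mapsto y|_{\vertexset}$, and then show $\rho$ is a bijection from $E(\lambda)$ onto the $(1-\cos\omega)$-eigenspace of $\Delta_1$. The first step is a purely algebraic reformulation: since $\Delta_1 = I - T^{-1}A$, a function $f \in \vertspace$ satisfies $\Delta_1 f = (1-\cos\omega)f$ if and only if $T^{-1}A f = \cos(\omega)f$. Thus Theorem~\ref{ceqn} is already set up to hand me the eigenvalue equation for $\Delta_1$ in exactly the right form.

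For the forward direction, I would start with $y \in E(\lambda)$ and apply Theorem~\ref{ceqn} at each vertex $v$: the identity \eqref{combeqn} says precisely that $\rho(y) = y|_{\vertexset}$ satisfies $T^{-1}A(\rho y) = \cos(\omega)\rho(y)$, hence $\Delta_1 \rho(y) = (1-\cos\omega)\rho(y)$. So $\rho$ maps $E(\lambda)$ linearly into the $(1-\cos\omega)$-eigenspace of $\Delta_1$. Injectivity of $\rho$ comes for free from Lemma~\ref{vanishing}: if $y \in E(\lambda)$ vanishes at every vertex, then (since $\lambda \notin \{n^2\pi^2\}$) $y \equiv 0$.

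The backward direction is the main piece of work. Given $f$ with $T^{-1}Af = \cos(\omega)f$, I would build $y$ edge by edge: on each edge $e = (u,v)$, define $y_e$ by formula \eqref{eform} with prescribed endpoint values $f(u),\,f(v)$. This automatically gives a well-defined continuous function on $\graph$ with $-y'' = \lambda y$ on edges and $y|_{\vertexset} = f$. The one nontrivial check is the derivative condition \eqref{derive} at each vertex $v$. For this I would run the computation of Theorem~\ref{ceqn} in reverse: sum \eqref{vertexform} over the edges incident to $v$, substitute the eigenvalue relation $\sum_i f(u_i) = \deg(v)\cos(\omega)f(v)$, and observe that the two surviving terms combine, via $1-\cos^2\omega = \sin^2\omega$, to give exactly $0$. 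Thus $y$ lies in $\domain$ and is an eigenfunction with eigenvalue $\lambda$, and $\rho(y) = f$.

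Having established that $\rho$ is a linear bijection between $E(\lambda)$ and the $(1-\cos\omega)$-eigenspace of $\Delta_1$, equality of geometric multiplicities is immediate. I do not expect a serious obstacle; the only subtlety is keeping the local coordinates consistent when computing $\partial_\nu$ (the convention $f'(a_n)$ vs.\ $-f'(b_n)$), which matters for the sign bookkeeping in the reverse calculation but is a direct consequence of the symmetry of \eqref{eform} under swapping endpoints.
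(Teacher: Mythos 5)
Your proposal is correct and follows essentially the same route as the paper: the forward direction via Theorem~\ref{ceqn} plus injectivity from Lemma~\ref{vanishing}, and the backward direction by extending vertex data with \eqref{eform} and reversing the computation of \eqref{vertexform} to verify the derivative condition. The only cosmetic difference is that you package the two injections as a single bijective restriction map, which is what the paper's argument amounts to anyway.
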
  

\begin{proof}
Since $\Delta _1 = I - T^{-1}A$, we may work with $T^{-1}A$.
Suppose first that $y(x,\lambda )$ is an eigenfunction of $\Delta _{\infty}$ satisfying the given vertex conditions. 
\thmref{ceqn} shows that the (linear) evaluation map taking $y:\graph \to \complex $ to 
$y:\vertexset \to \complex $ 
takes eigenfunctions to solutions of \eqref{combeqn}.
By \lemref{vanishing} the kernel of this map is the zero function, so the map is injective.

Suppose conversely that $y:\vertexset \to \complex $ satisfies 
\[ T^{-1}A y (v) = \mu y(v), \quad | \mu | < 1. \]
Pick $\lambda \in \cos^{-1}(\mu )$.  By \lemref{endmap} the function $y:\vertexset \to \complex $ 
extends to a unique continuous function $y(x,\lambda ): \graph \to \complex $ 
satisfying $-y'' = \lambda y$ on each edge.

In local coordinates identifying $v$ with $0$ for each edge $e_i = (v,u_i)$ incident on $v$,
this extended function satisfies \eqref{vertexform}.
Summing gives
\[\sum_i y_i'(v) = \frac{\omega}{\sin(\omega )} [-\sin^2 (\omega )-\cos^2 (\omega )] \sum_i y_i(u_i) 
+ \sum_i y_i(v) \frac{\omega \cos(\omega )}{\sin(\omega )} \]
\[ = -\frac{\omega}{\sin(\omega )} \sum_i y_i(u_i) 
+ \deg (v) y(v) \frac{\omega \cos(\omega )}{\sin(\omega )}. \]
The vertex values satisfy \eqref{combeqn}, so
\[\sum_i y_i'(v) = -\frac{\omega}{\sin(\omega )} \deg (v) \cos(\omega )y(v) 
+ \deg (v) y(v) \frac{\omega \cos(\omega )}{\sin(\omega )} = 0. \]
Thus the extended functions are eigenfunctions of $\Delta _{\infty}$ satisfying the standard vertex conditions.
Since the extension map is linear, and the kernel is the zero function, this map    
is also injective.  

\end{proof}

\subsection{Eigenfunctions at $n^2 \pi ^2$}

Now we turn to eigenvalues $\lambda \in \{ n^2 \pi ^2 \} $.
First recall \cite[p. 7]{Chung} that for both $\Delta _1$ and $\Delta _{\infty}$
$0$ is an eigenvalue whose eigenspace is spanned by functions which 
are constant on connected components of $\graph $.  

For $n \ge 1$ these eigenspaces for $\Delta _{\infty}$ also have a combinatorial interpretation,
closely related to the cycles in $\graph $.  If $C$ is a cycle, and $x$ is distance
along the cycle starting at some selected vertex, 
then the function $\sin (2n\pi x)$ is an eigenfunction of $\Delta _{\infty}$.  
Similarly, if $C$ is an even cycle, then $\sin(n\pi x)$ gives a similar eigenfunction.  
We can make the following observation.

\begin{lem} \label{zerolem}
Suppose $\graph $ is connected, and $\psi $ is an eigenfunction of $\Delta _{\infty}$
with eigenvalue $\lambda = n^2\pi^2$ for $n \ge 1$. 
If $\psi $ vanishes at any vertex, then $\psi $ vanishes at all vertices.  
\end{lem}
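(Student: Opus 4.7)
The plan is to exploit the very special form of the eigenfunctions at $\lambda = n^2\pi^2$: on an edge parametrized as $[0,1]$, every solution of $-y'' = n^2\pi^2 y$ is of the form $A\cos(n\pi x) + B\sin(n\pi x)$, and the sine term vanishes at \emph{both} endpoints. So if the edge has $u$ at $x=0$ and $v$ at $x=1$, then $y(u) = A$ and $y(v) = (-1)^n A$. This rigid coupling is the whole content of the lemma.

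Concretely, I would proceed as follows. First, for each edge $e = (u,v)$ with local coordinate identifying $u$ with $0$ and $v$ with $1$, write the restriction of $\psi$ to $e$ as
\[
\psi_e(x) = A_e \cos(n\pi x) + B_e \sin(n\pi x),
\]
and record the immediate consequence
\[
\psi(u) = A_e, \qquad \psi(v) = (-1)^n A_e,
\]
so that $\psi(v) = (-1)^n \psi(u)$. In particular, $\psi$ vanishes at one endpoint of $e$ if and only if $A_e = 0$, in which case it automatically vanishes at the other endpoint as well.

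Next, suppose $\psi(v_0) = 0$ for some vertex $v_0$, and let $w$ be any other vertex. Since $\graph$ is connected, there is a path of edges $v_0 = w_0, w_1, \dots, w_k = w$ with $w_{j-1}$ and $w_j$ joined by an edge. Applying the observation above inductively along the path gives $\psi(w_j) = 0$ for each $j$, hence $\psi(w) = 0$. This establishes the lemma.

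The argument is essentially immediate once one writes out the edgewise formula, so there is no real obstacle; the only point to be careful about is the choice of local orientation on each edge, but since the conclusion $\psi(u) = 0 \Leftrightarrow \psi(v) = 0$ is symmetric in the two endpoints, orientation does not affect the propagation step.
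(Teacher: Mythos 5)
Your proof is correct and is essentially the paper's own argument: in both cases the key observation is that $\sin(n\pi x)$ vanishes at both endpoints of a unit edge, so $\psi$ vanishes at one endpoint of an edge if and only if it vanishes at the other, and connectivity then propagates the zero to every vertex. Your explicit relation $\psi(v) = (-1)^n\psi(u)$ is just a slightly more quantitative restatement of the same step.
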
 

\begin{proof}
Suppose $\psi (v) = 0$ for some vertex $v$.  On any   
edge incident on $v$, the eigenfunction is a linear combination 
\[ \psi (x) = A\cos(n\pi x) + B\sin(n\pi x), \quad v \simeq 0,\]
and clearly $A = 0$.  Since all edge lengths are $1$, at all adjacent vertices $w$, we then have
$\psi (w) = B\sin(n\pi ) = 0$.  By continuity of $\psi $ and 
connectivity of the graph, $\psi $ vanishes at all vertices.
\end{proof}

The next result explores the existence of eigenfunctions vanishing at no vertices.

\begin{lem} \label{nzcnt}
If $\lambda = (2n\pi )^2 $, for $n = 1,2,3,\dots $,
then $\Delta _{\infty}$ has an eigenfunction vanishing at no vertices.

If $\lambda = (2n-1)^2 \pi ^2$, $n = 1,2,3,\dots $,
then $\Delta _{\infty}$ has an eigenfunction vanishing at no
vertices if and only if $\graph$ is bipartite.
\end{lem}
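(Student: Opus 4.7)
The plan is to exploit the explicit form of edge-solutions at these eigenvalues: on each edge of length $1$, any solution of $-y'' = k^2\pi^2 y$ reads $A\cos(k\pi x) + B\sin(k\pi x)$, so in a local coordinate with $x=0$ at $u$ and $x=1$ at $v$ the endpoint values are $\psi(u)=A$ and $\psi(v)=(-1)^k A$, while the outward derivatives reduce to $\pm B k\pi$ (the sine factor at $x=0,1$ vanishes). The parity of $k$ is therefore decisive for whether an eigenfunction can be arranged to be nonzero at every vertex.

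For the even case $\lambda = (2n\pi)^2$, I would define $\psi\big|_e(x) = \cos(2n\pi x)$ on every edge, a choice that is orientation independent since $\cos(2n\pi(1-x)) = \cos(2n\pi x)$. Then $\psi\equiv 1$ on $\vertexset$, which is automatically continuous at every vertex, and the coefficient $B$ vanishes on every edge, so each outward derivative is $0$ and the derivative condition holds trivially. This settles the unconditional existence claim without any hypothesis on $\graph $.

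For the odd case $\lambda = (2n-1)^2 \pi^2$ I would argue the two directions separately. For sufficiency, given a bipartition $c:\vertexset \to \{+1,-1\}$, I orient each edge from its $+1$ endpoint $u$ to its $-1$ endpoint $v$ and set $\psi\big|_e(x) = c(u)\cos((2n-1)\pi x)$; the identity $(-1)^{2n-1} = -1$ gives $\psi(u)=c(u)$ and $\psi(v)=c(v)$, which makes $\psi$ well-defined and continuous at every vertex, while $B=0$ on every edge again kills the vertex derivatives. For necessity, suppose $\psi$ is an eigenfunction with $\psi(v)\ne 0$ for every $v$; the same endpoint formula forces $\psi(v) = -\psi(u)$ on every edge. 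Walking around any cycle $v_0, v_1,\dots, v_m = v_0$ then yields $\psi(v_0) = (-1)^m \psi(v_0)$, and $\psi(v_0)\neq 0$ forces $m$ even, so $\graph $ has no odd cycles and is bipartite.

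The main subtlety I anticipate is the necessity direction, since $\psi$ is allowed to be complex-valued and so cannot literally be turned into a two-valued coloring by taking its sign; the cycle-parity argument circumvents this because it uses only the multiplicative identity $\psi(v) = -\psi(u)$ along each edge. A minor bookkeeping point is to handle a possibly disconnected $\graph $ by running the arguments on each connected component, which is harmless since both bipartiteness and the standard vertex conditions localize to components.
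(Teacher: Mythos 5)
Your proof is correct and follows essentially the same route as the paper: $\cos(2n\pi x)$ on each edge for the even case, $\cos((2n-1)\pi x)$ in coordinates adapted to the bipartition for sufficiency, and the edge relation $\psi(v)=-\psi(u)$ for necessity. Your only refinement is deducing bipartiteness from cycle parity rather than from a sign labeling of $\psi(v)$ (which the paper uses and which tacitly assumes $\psi$ real), a minor but legitimate tightening.
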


\begin{proof}  
If $\lambda = (2n \pi )^2$, then the desired eigenfunction is simply
$\cos(2n\pi x)$ in local coordinates on each edge.

Suppose $\graph$ is bipartite, with the two classes of vertices labelled $0$ and $1$.  
Pick local coordinates on each edge consistent with the vertex class labels, and
define the eigenfunction to be $\cos([2n-1]\pi x)$.

Suppose conversely that for some $\lambda = (2n-1)^2 \pi ^2$, 
there is an eigenfunction $\psi $ vanishing at no
vertex.  Label the vertices $v$ according to the sign of $\psi (v)$.
In local coordinates for an edge, 
\[\psi (x) = a\cos ([2n-1]\pi x) + b  \sin ([2n-1]\pi x) , \quad a \not= 0.\]
Then if $w$ is a vertex adjacent to $v$ we see that $\psi (w) = -\psi (v)$,
showing that vertices are only adjacent to vertices of opposite sign,
and $\graph $ is bipartite.

\end{proof}

In addition to the vertex space mentioned above, an edge space may be constructed
using the edges of a graph as a basis (we assume the field is $\real $
or $\complex$).  The edge space has the cycle subspace $Z_0(\graph )$ generated 
by cycles, with dimension $N_E - N_V + 1$ \cite[pp. 51--58]{Bollobas} or \cite[pp. 23--28]{Diestel}.  
Let $E_0(n^2\pi ^2) \subset E(n^2\pi ^2)$ be those eigenfunctions of $\Delta _{\infty}$ 
vanishing at the vertices.

\begin{thm} \label{zcnt}
\[ {\rm dim} Z_0(\graph ) = {\rm dim} E_0(4n^2\pi ^2). \]
\end{thm}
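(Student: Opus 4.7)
The plan is to produce an explicit linear isomorphism between $E_0(4n^2\pi^2)$ and $Z_0(\graph)$ by using the parameters of the eigenfunction on each edge as the coefficients of a $1$-chain.

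First I would characterize an arbitrary $\psi \in E_0(4n^2\pi^2)$. Fix an orientation of every edge, so each $e$ is parametrized as $[0,1]$ with a designated tail (at $x=0$) and head (at $x=1$). On $e$, the eigenfunction equation $-\psi'' = 4n^2\pi^2 \psi$ gives $\psi|_e(x) = A_e \cos(2n\pi x) + B_e \sin(2n\pi x)$. Vanishing at the tail forces $A_e = 0$, while vanishing at the head requires $B_e \sin(2n\pi) = 0$, which holds automatically. So every $\psi \in E_0(4n^2\pi^2)$ is determined by a scalar $B_e$ for each edge via $\psi|_e(x) = B_e \sin(2n\pi x)$, and conversely any such choice yields a function that is continuous on $\graph$ and vanishes at all vertices.

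Next I would translate the derivative condition \eqref{derive} into a condition on the $B_e$'s. The outward normal derivative of $B_e \sin(2n\pi x)$ at the tail is $+2n\pi B_e$ (since $\cos(0)=1$), and at the head is $-2n\pi B_e \cos(2n\pi) = -2n\pi B_e$. Hence the derivative condition at a vertex $v$ reads
\[
\sum_{e:\,\mathrm{tail}(e)=v} B_e \;-\; \sum_{e:\,\mathrm{head}(e)=v} B_e \;=\; 0 ,
\]
which is exactly the statement that the $1$-chain $\sum_e B_e\, e$ lies in the kernel of the boundary map $\partial\colon C_1(\graph) \to C_0(\graph)$; that kernel is, over $\real$ or $\complex$, precisely the cycle space $Z_0(\graph)$ (of dimension $N_{\edgeset} - N_{\vertexset} + 1$ in the connected case, the general case following componentwise).

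Finally, I would define the map $\Phi \colon E_0(4n^2\pi^2) \to Z_0(\graph)$ by $\Phi(\psi) = \sum_e B_e\, e$, with the $B_e$ read off as above. The two steps show $\Phi$ is well-defined with image in $Z_0(\graph)$ and that the construction is reversible: given any element of $Z_0(\graph)$, assigning $B_e \sin(2n\pi x)$ on each edge produces a member of $E_0(4n^2\pi^2)$. Linearity is immediate, and injectivity is clear because the $B_e$'s determine $\psi$ uniquely. The main obstacle I expect is purely bookkeeping, namely aligning the sign conventions of the outward normal derivative with the tail/head conventions of the simplicial boundary map so that the vertex condition turns cleanly into the cycle condition; once an orientation is fixed this is straightforward, but it must be stated carefully for the identification $Z_0(\graph) = \ker \partial$ to apply.
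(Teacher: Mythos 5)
Your proof is correct, and it takes a genuinely different route from the paper's. The paper proves the two inequalities separately: it fixes a spanning tree $\tree$, takes the fundamental cycle basis $C_1,\dots,C_M$ (each $C_j$ owning a private chord $e_j\notin\tree$), plants $\sin(2n\pi x)$ along each $C_j$ to get $M$ independent elements of $E_0(4n^2\pi^2)$, and then shows surjectivity of this construction by subtracting off a combination so that a given $\psi$ vanishes on all chords and propagating the vanishing inward from the leaves of $\tree$. You instead exhibit a single explicit isomorphism $\psi\mapsto\sum_e B_e\,e$, reading the coefficient of $\sin(2n\pi x)$ on each oriented edge as a $1$-chain coefficient and observing that the Kirchhoff derivative condition \eqref{derive} is exactly the statement that this chain lies in $\ker\partial$. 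The one point you must pin down is that $Z_0(\graph)$ as the paper defines it (the span of cycles in the edge space) coincides with $\ker\partial$ over $\real$ or $\complex$; this is standard and is proved in the cited pages of Bollob\'as, so your identification is legitimate, and your sign bookkeeping (the coefficient $B_e$ and the basis chain $e$ both flip under reversal of orientation, since $\sin(2n\pi(1-x))=-\sin(2n\pi x)$) makes the map orientation-independent. Your argument is arguably cleaner and is exactly parallel to the paper's own treatment of the odd case in Theorem~\ref{zcnt2}, where the space $Z_1$ of edge functions with unsigned vertex sums zero plays the role that $\ker\partial$ plays for you; what the paper's spanning-tree proof buys in exchange is an explicit combinatorial basis of $E_0(4n^2\pi^2)$ indexed by fundamental cycles, which the paper uses in the discussion following the theorem.
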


\begin{proof}
It suffices to prove the result for a connected graph.
Suppose $Z_0(\graph )$ has dimension $M$.  Picking a spanning tree $\tree $ for $\graph $,
there is \cite[p. 53]{Bollobas} a basis of cycles $C_1,\dots ,C_{M} $ such that each $C_j$ contains
an edge $e_j \notin \tree $, with $e_j$ not contained in any other $C_i$.
Fix $n \in \{ 1,2,3,\dots \}$ and construct eigenfunctions $f_j = \sin(2\pi nx)$ on the edges of $C_j$, 
and $0$ on all other edges.  Here $x$ denotes distance along the cycle starting at some selected vertex. 
If a linear combination $\sum a_if_i$ is $0$, then for $x \in e_j$
\[ 0 = \sum_{i=1}^M a_if_i(x) = a_jf_j(x) , \quad j = 1,\dots ,M. \]
There are $x \in e_j$ where $f_j(x) \not= 0$, so $a_j = 0$ and the functions $f_i$ are independent.
This shows ${\rm dim}E_0(4n^2\pi ^2) \ge {\rm dim}Z_0(\graph )$.

Now suppose that $\psi \in E_0(4n^2\pi ^2)$. After subtracting a linear combination 
$ \sum a_if_i $ we may assume that $\psi $ vanishes on all 
edges not in the spanning tree $\tree $.
For every boundary vertex $v$ of $\tree$,
$\psi $ vanishes identically on all but one edge of $\graph $ incident on $v$,
and by the vertex conditions it then vanishes on all edges incident on $v$.  
Continuing away from the boundary of the spanning tree,
we see that $\psi$ is the $0$ function.  

\end{proof}

\vskip 5pt
\centerline{\includegraphics{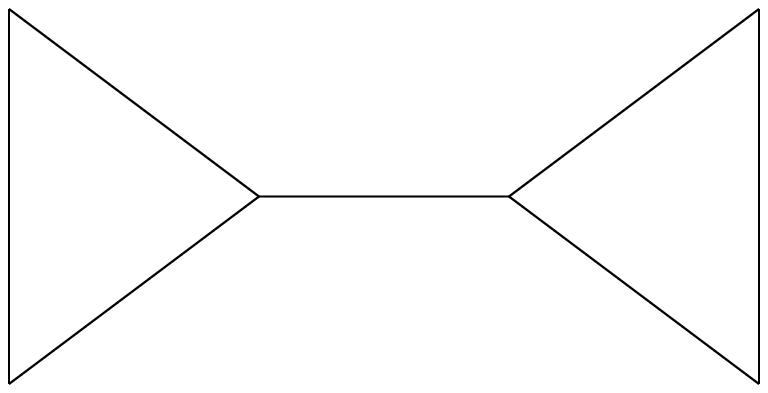}}
\centerline{Figure 2.1: Bowtie graph}
\vskip 5pt

The proof of \thmref{zcnt} provides a combinatorial basis construction for $E_0(4n^2\pi ^2)$.
A similar construction using even cycles will provide independent elements of $E_0((2n-1)^2\pi ^2)$,
but these may not form a complete set.  Consider the bowtie graph in Figure 2.1, 
whose cycles have length $3$.  An eigenfunction $\psi $ with eigenvalue $\pi ^2$
can be constructed by letting $\psi (x) = 2\sin (\pi x)$ on the middle edge.
The function $\psi $ then continues as $-\sin (\pi x)$ on the adjacent four edges,
and $\sin (\pi x)$ on the remaining two edges.  Notice that on each edge
the function $\psi $ is an integer multiple of $\sin (\pi x)$.   

A generalization of the even cycles will be used for a combinatorial construction 
of $E_0((2n-1)^2\pi ^2)$. 
Let $Z_1$ denote the set of functions $f:\edgeset \to \complex $
such that 
\[ \sum_{e \simeq v} f(e) = 0, \quad v \in \vertexset .\]

\begin{thm} \label{zcnt2}
The linear map taking $f \in Z_1$ to $g(x)$ defined by
\[ g_e(x) = f(e) \sin ((2n-1)\pi x),\] 
is an isomorphism
from  $Z_1$ onto $E_0((2n-1)^2\pi ^2)$.
The subspace $Z_1$ has an integral basis.
\end{thm}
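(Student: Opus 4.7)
The plan is to split the theorem into two independent claims: (a) the given map $\Phi: f\mapsto g$ is an isomorphism onto $E_0((2n-1)^2\pi^2)$, and (b) $Z_1$ has an integral basis.

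For (a), I would proceed in four short steps. First, since $2n-1$ is odd we have $\sin((2n-1)\pi(1-x))=\sin((2n-1)\pi x)$, so $g_e(x)=f(e)\sin((2n-1)\pi x)$ does not depend on the choice of local coordinate on the edge $e$, and $g$ is well-defined on $\graph$. Second, I would check that $g\in E_0((2n-1)^2\pi^2)$: the equation $-g_e''=(2n-1)^2\pi^2 g_e$ is immediate, $g_e$ vanishes at both endpoints (because $\sin 0=0$ and $\sin((2n-1)\pi)=0$), so continuity at every vertex holds automatically and the vanishing at vertices is built in. Placing $v$ at the origin in local coordinates on each incident edge gives $\partial_\nu g_e(v)=(2n-1)\pi f(e)$, so the standard derivative condition at $v$ reduces to $\sum_{e\sim v}f(e)=0$, which is exactly $f\in Z_1$.

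Third, injectivity of $\Phi$ is immediate from the formula: $g\equiv 0$ forces $f(e)=0$ for every edge. Fourth, for surjectivity, take $\psi\in E_0((2n-1)^2\pi^2)$; on each edge $\psi_e(x)=a_e\cos((2n-1)\pi x)+b_e\sin((2n-1)\pi x)$ in local coordinates, and the requirement $\psi_e(0)=0$ forces $a_e=0$. Setting $f(e):=b_e$, the same derivative computation as in step two translates the standard condition on $\psi$ at each vertex into $\sum_{e\sim v}f(e)=0$, giving $f\in Z_1$ with $\Phi(f)=\psi$.

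For (b), I would observe that $Z_1=\ker B$ where $B$ is the $N_{\vertexset}\times N_{\edgeset}$ unsigned vertex-edge incidence matrix, a $\{0,1\}$-matrix. Because $B$ has integer entries, reducing over $\Z$ (Gaussian elimination or Smith normal form) produces a $\Z$-basis of $\ker B\cap\Z^{N_{\edgeset}}$ whose image in $\complex^{N_{\edgeset}}$ is a $\complex$-basis of $Z_1$. The only real subtlety in the whole proof is the orientation point in step one; once $\sin((2n-1)\pi(1-x))=\sin((2n-1)\pi x)$ is noted, the remainder is a direct comparison of the two-dimensional solution space on each edge with the two scalar conditions (vanishing at the endpoints, and the edge-sum derivative condition at each vertex).
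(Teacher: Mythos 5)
Your proposal is correct and takes essentially the same route as the paper's proof: the same orientation observation $\sin((2n-1)\pi x)=\sin((2n-1)\pi(1-x))$, the same translation of the derivative condition at each vertex into $\sum_{e\sim v}f(e)=0$, the same coefficient-extraction argument in the reverse direction, and the same integer/Gaussian-elimination argument for the integral basis of $Z_1$.
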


\begin{proof}
Since $\sin ((2n-1)\pi x) = \sin ((2n-1)\pi (1-x)) $, 
the edge orientation does not affect the definition, so $g$ is well defined.   
Clearly $g(x)$ satisfies the eigenvalue equation and vanishes at each vertex.
The condition $ \sum_{e \simeq v} f(e) = 0$ for all $ v \in \vertexset $ gives
the derivative condition, so $g \in E_0((2n-1)^2\pi ^2)$.
Moreover the map is one to one.

Suppose $\psi (x) \in E_0((2n-1)^2\pi ^2)$.  Then 
$\psi _e(x) = a_e\sin((2n-1)\pi x)$ on each edge, and because $\psi $ satisfies the derivative
conditions we have 
\[ \sum_{e \simeq v} a_e = 0, \quad v \in \vertexset .\]
Define $f(e) = a_e$ to get a linear map from $E_0((2n-1)^2\pi ^2)$ to $Z_1$,
which is also one to one.  This establishes the isomorphism.

To see that $Z_1$ has an integral basis, let $e_n$ be a numbering of the edges of $\graph $,
and let $x_n = f(e_n)$.  The set of functions $Z_1$ is then given by the
set of $x_1,\dots ,x_{N_{\edgeset}}$ satisfying the $N_{\vertexset}$ equations 
\[ \sum_{e_n \simeq v} x_n  = 0.\]
This is a system of linear homogeneous equations whose coefficient matrix consists of 
ones and zeros.  Reduction by Gaussian elimination shows that the set of solutions 
has a rational basis, and so an integral basis.

\end{proof}

\section{Graph refinements}

Now we introduce the notion of graph refinement for graphs whose edge lengths are $1$.
Let the original combinatorial graph be denoted $\graph _1$, with the operator
$\Delta _1$ acting on the vertex space $\vertspace _1$.  
For each integer $N > 1$ we will define a graph $\graph _N$ with vertex space $\vertspace _N$
and operator $\Delta _N:\vertspace _N \to \vertspace _N$ by 
subdividing each edge $e \in \graph _1$ into $N$ edges.  Pick local coordinates identifying $e$ 
with $[0,1]$, and labelling $0 = x_0$ and $1 = x_N$.
For $n = 1,\dots ,N-1$, introduce points $x_n = n/N$ in the local coordinates. 
The new graph $\graph _N$ 
with $I = N_{\vertexset} + (N-1)N_{\edgeset}$ vertices
will have the vertex set consisting of the vertices of $\graph _1$,
together with the new vertices $x_n$, for each edge $e$ of $\graph _1$.  
The vertex $x_n$ is adjacent to $x_{n-1}$ and $x_{n+1}$ for $n = 1,\dots ,N-1$. 
If $v$ is a vertex in $\graph _1$, and the local coordinates for edges incident on $v$
are chosen so $v$ is identified with $0$ on each edge, then $v$ is adjacent in $\graph _N$
with the vertices $x_1$ for each of the incident edges.  An example is illustrated in Figure 3.1.

\vskip 8pt

\centerline{\includegraphics[scale=.75]{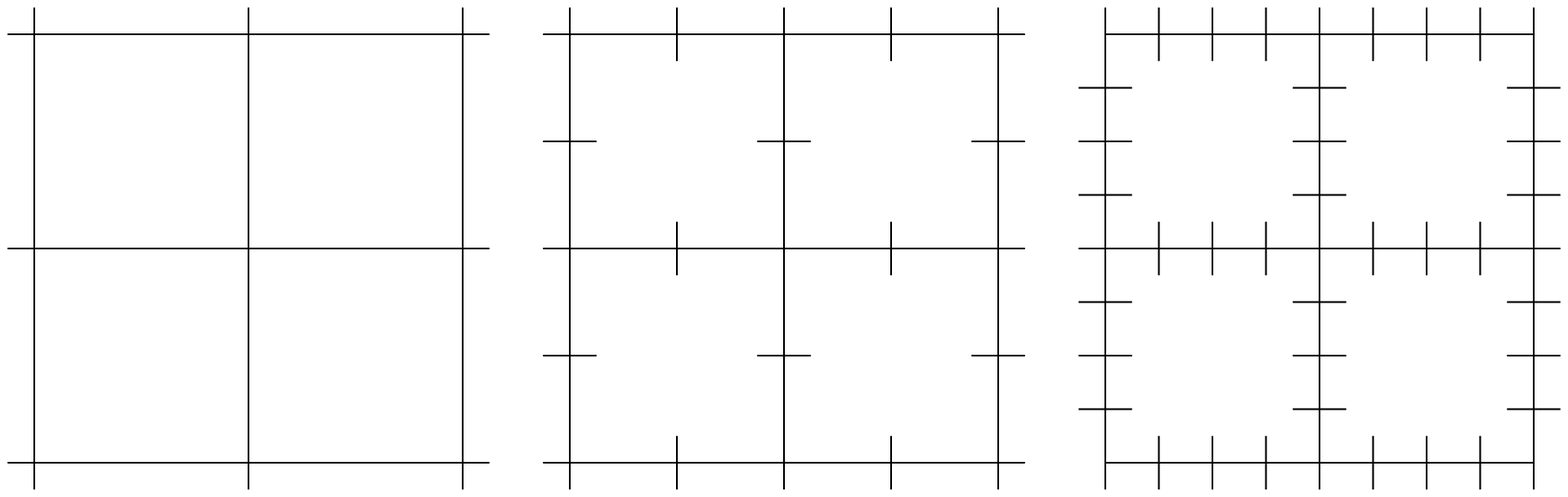}}
\centerline{ \hskip 5pt $\graph _1$ \hskip 122pt $\graph _2$ \hskip 122pt $\graph _4$ } 
\centerline{Figure 3.1: Refinement of a graph} 

\vskip 5pt

An inner product on the vertex space $\vertspace _N$ is defined by
\[ \langle f,g \rangle _{N} =
 \frac{1}{W}\sum_v deg(v) f(v)\overline{g(v)},
\quad W = \sum_{v \in \graph _N} deg(v) = 2NN_{\edgeset} .\]
Using the above identifications of edges of $\graph _N$ with subintervals of $[0,1]$,
we may also construct the corresponding continuous graph having edges of length $1/N$.
These continuous graphs may all be identified with the continuous graph $\graph _\infty$ 
corresponding to $\graph _1$.  In particular they will share the inner product 
\[ \langle f,g \rangle _{\infty } = \frac{1}{N_{\edgeset}}\int_{\graph _{\infty}} f\overline{g},\]
where $N_{\edgeset}$ is the number of edges in $\graph _1$.
As a continuous graph, the standard vertex conditions hold at all vertices of $\graph _N$.  
Although appearing different in definition, the continuous graph Laplacian $\Delta _{\infty}$
has not changed as we pass from $\graph _1$ to $\graph _N$.

Let $\Delta _N$ denote the following operator on the vertex space of $\graph _N$, 
\[ \Delta _N = N^2(I - T^{-1}A ).\]
The eigenspaces of these operators with eigenvalue $\lambda $ will be denoted
$E_N(\lambda )$, while $E_{\infty}(\lambda )$ will denote an eigenspace
for $\Delta _{\infty}$. 
By \eqref{rescale} and \thmref{combspec}, the mapping 
$N^2(1 - \cos(\sqrt{\lambda}/N))$ 
carries eigenvalues of $\Delta _{\infty}$ to eigenvalues of $\Delta _N$
if $\lambda /N^2 \notin \{ n^2 \pi ^2 \}$.
That is, the eigenvalues of the normalized adjacency
operator are $\cos(\sqrt{\lambda }/N)$ whenever $\lambda \notin \{ (Nn\pi )^2 \}$
is an eigenvalue for $\Delta _{\infty}$.
Notice the eigenvalues of $\Delta _N$ are
approximately $ \lambda /2 $ when $\lambda $ is small compared to $N$.

\begin{lem}
For $1 \le N < \infty$ the operator $\Delta _N$ is self adjoint on the vertex space
with the inner product $\langle f,g \rangle _{N} $.
\end{lem}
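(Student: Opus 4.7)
The plan is to reduce the claim to the self-adjointness of $T^{-1}A$ in the weighted inner product $\langle \cdot,\cdot\rangle_N$, since $\Delta_N = N^2(I - T^{-1}A)$ and the identity is trivially self-adjoint while self-adjointness is preserved under real scalar multiplication and real linear combinations. So the only nontrivial content is the identity $\langle T^{-1}A f, g\rangle_N = \langle f, T^{-1}A g\rangle_N$.

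First I would unfold the left-hand side directly from the definitions. By the definition of $\langle\cdot,\cdot\rangle_N$ and of $T^{-1}A$,
\[
\langle T^{-1}A f, g\rangle_N
= \frac{1}{W}\sum_{v} \deg(v)\cdot \frac{1}{\deg(v)}\sum_{u \sim v} f(u)\,\overline{g(v)}
= \frac{1}{W}\sum_{v}\sum_{u\sim v} f(u)\,\overline{g(v)}.
\]
The key observation — and essentially the only substantive step — is that the weight $\deg(v)$ in the inner product has been designed precisely to cancel the $\deg(v)^{-1}$ coming from the $T^{-1}$ normalization, leaving a sum over ordered adjacent pairs $(v,u)$ with no remaining degree factors.

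Next I would exploit the symmetry of the adjacency relation on the undirected graph $\graph_N$: the set of ordered pairs $\{(v,u):u\sim v\}$ is invariant under the swap $(v,u)\mapsto (u,v)$. Relabeling the summation indices accordingly,
\[
\frac{1}{W}\sum_{v}\sum_{u\sim v} f(u)\,\overline{g(v)}
= \frac{1}{W}\sum_{u}\sum_{v\sim u} f(u)\,\overline{g(v)}
= \frac{1}{W}\sum_{v}\deg(v)\, f(v)\cdot\frac{1}{\deg(v)}\overline{\sum_{u\sim v}g(u)}
= \langle f, T^{-1}A g\rangle_N,
\]
after repackaging the right-hand side back into the same weighted form. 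This gives self-adjointness of $T^{-1}A$, hence of $\Delta_N$.

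There is no real obstacle here; the proof is routine once one notices the design of the weight. The only thing to be a little careful about is bookkeeping of the ordered-pair sum (so as not to double-count or miscount edges versus ordered pairs), and to record that everything takes place on the finite-dimensional space $\vertspace_N$, where self-adjointness with respect to a positive definite inner product reduces to this algebraic symmetry identity.
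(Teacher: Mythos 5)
Your proof is correct and follows the same route as the paper, which simply remarks that it suffices to check $T^{-1}A$ and leaves the verification to the reader. Your explicit computation — that the weight $\deg(v)$ in $\langle\cdot,\cdot\rangle_N$ cancels the $\deg(v)^{-1}$ from $T^{-1}$, reducing the identity to the symmetry of the ordered adjacent-pair sum — is exactly the intended check and fills in the detail the paper omits.
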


\begin{proof}
It suffices to check the normalized adjacency operator $T^{-1}A$.
\end{proof}

Let $E_p(n^2\pi ^2)$ denote the subspace spanned by eigenfunctions 
of $\Delta _{\infty}$ having the form
$\cos(n\pi x)$ on each edge (so not vanishing at the vertices). 
Let $\espace _N \subset L^2(\graph _{\infty})$ denote the subspace 
\[\espace _N = {\rm span} \{ E_p(N^2\pi ^2), E_{\infty}(\lambda ), 
0 \le \lambda < N^2\pi ^2 \} .\]
Here is preliminary result describing the restriction of eigenfunctions
of $\Delta _{\infty}$ to the vertices of $\graph _N$.

\begin{prop} \label{bigprop1}
The restriction map $R_N:\espace _N \to \vertspace _N$ is an bijection.
For $0 \le \lambda < N^2 \pi ^2$ this map takes distinct orthogonal eigenspaces 
$E_{\infty}(\lambda )$ of $\Delta _{\infty}$ onto distinct orthogonal eigenspaces 
$E_N( N^2(1 - \cos (\sqrt{\lambda }/N)))$ of $\Delta _{N}$,
and $R_N$ takes $E_p(N^2\pi ^2)$ onto $E_N(2N^2)$. 
\end{prop}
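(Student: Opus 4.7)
The plan is to decompose $\espace_N$ by eigenspace summand, use \thmref{combspec} after rescaling to handle the interior eigenvalues, and treat the boundary eigenvalue $\lambda = N^2\pi^2$ directly. Orthogonality will come for free: $R_N$ carries distinct eigenspaces of $\Delta_{\infty}$ into distinct eigenspaces of $\Delta_N$, and eigenspaces of a self-adjoint operator at distinct eigenvalues are orthogonal.

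For $0 < \lambda < N^2\pi^2$, I would view $\graph_{\infty}$ as the continuous graph whose combinatorial skeleton is $\graph_N$, with edges of length $1/N$. The rescaling \eqref{rescale} with $L = 1/N$ makes $\mu = \lambda/N^2$ an eigenvalue of the length-$1$ version; since $\mu \in (0,\pi^2) \setminus \{n^2\pi^2\}$, \thmref{combspec} gives a bijection between $E_{\infty}(\lambda)$ and the eigenspace of $I - T^{-1}A$ on $\vertspace_N$ at $1 - \cos(\sqrt{\mu})$. Because $\Delta_N = N^2(I - T^{-1}A)$, this is precisely $E_N(N^2(1-\cos(\sqrt{\lambda}/N)))$, and the bijection is realized by $R_N$ itself (the evaluation map in the proof of \thmref{combspec}). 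For $\lambda = 0$ both eigenspaces are spanned by functions constant on connected components, so $R_N$ is plainly a bijection.

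The case $\lambda = N^2\pi^2$ is not reached by \thmref{combspec} and is the main obstacle. For $g \in E_p(N^2\pi^2)$ with local form $g_e(x) = c_e \cos(N\pi x)$, the restricted values at the subdivision points $x_k = k/N$ are $(-1)^k c_e$, and a direct check using continuity of $g$ at each original vertex verifies $T^{-1}A R_N g = -R_N g$, so $R_N g \in E_N(2N^2)$; injectivity of $R_N|_{E_p(N^2\pi^2)}$ is immediate from $c_e = g(x_0)$. For surjectivity, given $f \in E_N(2N^2)$, the degree-$2$ eigenvalue equation at interior subdivision vertices yields the recursion $f(x_{k+1}) = -2f(x_k) - f(x_{k-1})$, whose solutions along an edge $e$ with original endpoints $v = x_0$ and $w = x_N$ take the form $f(x_k) = (-1)^k[f(v) + k(-f(x_1) - f(v))]$. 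Inverting this to express $f(x_1)$ via $f(v)$ and $f(w)$, substituting into the eigenvalue equation at $v$, and simplifying, one obtains $\tfrac{1}{\deg(v)} \sum_{w \sim v} f(w) = (-1)^N f(v)$, an eigenvalue equation for $T^{-1}A$ on $\graph_1$ at eigenvalue $(-1)^N$. The eigenspace of $T^{-1}A$ at $+1$ consists of functions constant on components, and at $-1$ of bipartition indicators on bipartite components; in either parity this means $f(w) = (-1)^N f(v)$ pointwise on $\graph_1$-adjacent vertices, hence $f(x_1) = -f(v)$ and $f(x_k) = (-1)^k f(v)$ along every edge. Setting $c_e = f(v)$ and $g_e(x) = c_e \cos(N\pi x)$ produces a preimage $g \in E_p(N^2\pi^2)$.

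Assembling the summands, $R_N$ takes $\espace_N$ bijectively onto the orthogonal direct sum of all eigenspaces of $\Delta_N$, which exhausts $\vertspace_N$. The main difficulty is the surjectivity onto $E_N(2N^2)$, since $E_{\infty}(N^2\pi^2)$ is generally strictly larger than $E_p(N^2\pi^2)$ (by the vertex-vanishing piece $E_0$ from Section 2.3), so one must verify that $E_p$ alone is already rich enough. The reduction to an eigenequation for $T^{-1}A$ on the coarse graph $\graph_1$ is the mechanism that makes this work.
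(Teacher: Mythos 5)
Your proposal is correct and follows essentially the same route as the paper's proof: decompose $\espace _N$ by eigenspaces, apply \thmref{combspec} together with the rescaling \eqref{rescale} for $0<\lambda <N^2\pi ^2$, treat $\lambda =0$ and $\lambda =N^2\pi ^2$ separately, and obtain distinctness and orthogonality of the image eigenspaces from self-adjointness of $\Delta _N$ and the strict monotonicity of $\cos$ on $[0,\pi ]$. The one point where you go beyond the paper is the surjectivity of $R_N$ onto $E_N(2N^2)$, which the paper disposes of by citing the proof of \lemref{nzcnt}; your explicit recursion $f(x_{k+1})=-2f(x_k)-f(x_{k-1})$ at the degree-two subdivision vertices, reducing matters to the eigenvalue $(-1)^N$ of $T^{-1}A$ on $\graph _1$, supplies the dimension count that the paper's citation leaves implicit.
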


\begin{proof}
By \thmref{combspec} and \eqref{rescale}, $R_N$ is a bijection from  
the eigenspace $E_{\infty}(\lambda )$ of $\Delta _{\infty}$ to the eigenspace
$E_N( N^2(1 - \cos (\sqrt{\lambda }/N)))$ of $\Delta _{N}$.
The $0$ eigenspaces for $\Delta _{\infty}$ and $\Delta _N$ are just the
functions which are constant on the connected components of the respective graphs, so
$R_N$ is a bijection from $E_{\infty}(0)$ to $E_N(0)$.
From the proof of \lemref{nzcnt} we also see that 
$R_N$ is a bijection from $E_p(N^2\pi ^2)$ to $E_N(2N^2)$. 

Since $\Delta _{\infty}$ and $\Delta _{N}$ are self adjoint on their respective function spaces, 
and since $\cos (t)$ is strictly decreasing on $(0,\pi )$, distinct eigenspaces
of $\Delta _{\infty }$, which are orthogonal, are mapped to 
distinct eigenspaces of $\Delta _N$, which are orthogonal and span $H_N$.
\end{proof}

The restriction map $R_N:\espace _N \to \vertspace _N$ also has 
noteworthy features on the individual eigenspaces 
$E_{\infty}(\lambda )$ of $\Delta _{\infty}$.
Before stating the results, 
we start with a simple observation relating the $\vertspace _N$ 
inner product and the sums appearing in the trapezoidal rule for integration.
Recalling that $W = \sum_v deg(v) = 2NN_{\edgeset}$, the inner product for $\graph _N$ satisfies the identity
\[ W \langle f,g \rangle _{N} = \sum_v deg(v) f(v)\overline{g(v)} \]
\[ = \sum_{e \in \graph _1} [f_e(x_0)\overline{g_e(x_0)} + f_e(x_N)\overline{g_e(x_N)} +
2\sum_{n=1}^{N-1} f_e(x_n)\overline{g_e(x_n)}] .\]
These last sums are just the trapezoidal rule sums used for integrals over the edges of $\graph _{\infty}$.
With this motivation, if the continuous linear functional $T_N:C[0,1] \to \complex $ is defined by
\[T_N(f) = \frac{1}{2N}[f(x_0) + f(x_N) + 2\sum_{n=1}^{N-1} f(x_n)] ,\]
then
\begin{equation} \label{Nip}
\langle R_Nf,R_Ng \rangle _{N} = \frac{1}{N_{\edgeset}}\sum_{e \in \edgeset} T_N(f_e\overline{g_e}). 
\end{equation}

The following identities are also useful.  First
\[\int_0^1 e^{2i\omega x} = \frac{\exp(2i\omega ) - 1 }{2i\omega } = e^{i\omega }\frac{\sin(\omega )}{\omega }.\]
Then, for $0 < \omega < N$, a geometric series computation gives
\[2N T_N(\exp(i\omega x)) =  \exp(i\omega ) - 1 + 2\sum_{n=0}^{N-1}\exp(i\omega n/N) \]
\[ =  \exp(i\omega ) - 1 + 2\frac{1 - \exp(i\omega )}{1 - \exp(i\omega /N)} 
 = (1 - \exp(i\omega )) \frac{1 + \exp(i\omega /N)}{1 - \exp(i\omega /N)} ,\]
or
\begin{equation} \label{freqresp}
T_N(e^{i\omega x}) =  M_0(\frac{\omega }{2N} )\int_0^1 e^{i\omega x} , \quad 
M_0(z)  = M_0(-z) = z \cot (z). 
\end{equation}

These identities will be useful for comparing the inner products on $L^2(\graph _{\infty})$
and $\vertspace _N$.  The cases $\lambda = k^2\pi ^2$ with $0 \le k < N$ are
considered first.

\begin{thm} \label{bigthm1}
Suppose $f,g \in E_{\infty}(\lambda )$, with $0 \le \lambda \le N^2\pi ^2$. 
If $\lambda = k^2\pi ^2$ for an integer $k$ with $0 \le k < N$, then
\begin{equation} \label{ipform1}
\langle f,g \rangle _{\infty} = \langle R_Nf,R_Ng \rangle _{N}.
\end{equation} 
If $f,g \in E_p(N^2\pi ^2)$, then
\begin{equation} \label{ipform2}
\langle f,g \rangle _{\infty} = \frac{1}{2} \langle R_Nf,R_Ng \rangle _{N}.
\end{equation} 

\end{thm}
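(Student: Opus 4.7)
The plan is to reduce both identities to an edge-by-edge comparison of the trapezoidal functional $T_N$ against $\int_0^1$, acting on the trigonometric products $f_e\overline{g_e}$, using the frequency-response identity \eqref{freqresp}.

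First I would apply \eqref{Nip} to rewrite
\[ \langle R_Nf, R_Ng\rangle_N \;=\; \frac{1}{N_{\edgeset}}\sum_{e \in \edgeset} T_N(f_e\overline{g_e}), \]
so the assertion in each case reduces to comparing $T_N(f_e\overline{g_e})$ with $\int_0^1 f_e\overline{g_e}\,dx$ on each edge. On an edge, every element of $E_{\infty}(k^2\pi^2)$ restricts to $A\cos(k\pi x) + B\sin(k\pi x)$, so $f_e\overline{g_e}$ is a linear combination of $1$, $\cos(2k\pi x)$, and $\sin(2k\pi x)$; equivalently of $e^{i\omega x}$ for $\omega \in \{0,\pm 2k\pi\}$. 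Hence it suffices by linearity to compare $T_N$ and $\int_0^1$ on these exponentials.

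For \eqref{ipform1} I invoke \eqref{freqresp}. The case $\omega = 0$ gives $T_N(1) = 1 = \int_0^1 1\,dx$. For $1 \le k < N$ and $\omega = \pm 2k\pi$ the integral $\int_0^1 e^{i\omega x}\,dx$ vanishes, and $M_0(\pm k\pi/N) = (k\pi/N)\cot(k\pi/N)$ is finite because $0 < k\pi/N < \pi$ keeps $\sin(k\pi/N) \neq 0$. Therefore $T_N(e^{\pm 2ik\pi x}) = M_0(\pm k\pi/N)\cdot 0 = 0$ as well, and $T_N(f_e\overline{g_e}) = \int_0^1 f_e\overline{g_e}\,dx$ edge by edge. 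Summing and dividing by $N_{\edgeset}$ yields \eqref{ipform1}.

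For \eqref{ipform2} the situation at the Nyquist frequency $\omega = 2N\pi$ is different: \eqref{freqresp} breaks down because $M_0(\pi) = \pi\cot(\pi)$ is singular, and this is precisely where the factor $1/2$ comes from. Explicitly, for $f,g \in E_p(N^2\pi^2)$ one has $f_e(x) = a_e\cos(N\pi x)$ and $g_e(x) = c_e\cos(N\pi x)$ on each edge, whence
\[ f_e(x)\overline{g_e(x)} \;=\; \tfrac{1}{2}\,a_e\overline{c_e}\,\bigl(1 + \cos(2N\pi x)\bigr). \]
Direct summation gives $T_N(\cos(2N\pi x)) = 1$, since every sample value equals $\cos(2\pi n) = 1$, whereas $\int_0^1 \cos(2N\pi x)\,dx = 0$. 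Hence $T_N(f_e\overline{g_e}) = a_e\overline{c_e}$ while $\int_0^1 f_e\overline{g_e}\,dx = \tfrac{1}{2}a_e\overline{c_e}$, and summing over edges delivers \eqref{ipform2}.

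I expect the main obstacle to be simply identifying the aliasing at the Nyquist frequency as the source of the factor $1/2$: for subcritical eigenvalues the trapezoidal rule is exact on $f_e\overline{g_e}$ because both $T_N$ and $\int_0^1$ annihilate $e^{\pm 2ik\pi x}$, but at $\lambda = N^2\pi^2$ the sampled values of $\cos^2(N\pi x)$ all land at the maxima, doubling the integral. Once this distinction is flagged, everything else is elementary trigonometry combined with \eqref{freqresp}.
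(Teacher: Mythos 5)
Your proposal is correct and follows essentially the same route as the paper: both reduce via \eqref{Nip} to an edge-by-edge comparison of $T_N$ with $\int_0^1$ on the exponentials $e^{\pm 2i\omega x}$ appearing in $f_e\overline{g_e}$, observe that for $\omega = k\pi$ with $0 \le k < N$ both functionals annihilate these terms (the paper states $T_N(e^{2\pi i k x})=0$ directly, you derive it from \eqref{freqresp}; the two are equivalent), and obtain the factor $\tfrac12$ at $\lambda = N^2\pi^2$ from the direct computation that all samples of $\cos^2(N\pi x)$ equal $1$. Your identification of the Nyquist aliasing as the source of the $\tfrac12$ matches the paper's calculation exactly.
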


\begin{proof}

Suppose $f,g \in E_{\infty}(\lambda )$, with $0 \le \lambda < N^2\pi ^2$. 
On each edge $e$ we have
\begin{equation} \label{localform}
f_e = \alpha _ee^{i\omega x} + \beta _ee^{-i\omega x}, \quad 
g_e = \gamma _ee^{i\omega x} + \delta _e e^{-i\omega x}, \quad \omega ^2 = \lambda ,
\end{equation}
\[f_e\overline{g_e} =  \alpha _e\overline{\gamma _e} + \beta _e\overline{\delta _e} +
\alpha _e\overline{\delta _e} e^{2i\omega x} + \beta _e\overline{\gamma _e} e^{-2i\omega x}. \]

If $\lambda = k^2\pi ^2$ for an integer $k$ with $0 \le k < N$, then
\[ T_N(e^{2\pi ik x}) = \int_0^1 e^{2\pi ik x} = 0 .\]
Using \eqref{Nip} and $T_N(1) = 1$, it follows that
\[ \langle f,g \rangle _{\infty} = \frac{1}{N_{\edgeset}} \int_{\graph _{\infty}} f\overline{g} =  
\frac{1}{N_{\edgeset}} \sum _e [ \alpha _e\overline{\gamma _e} + \beta _e\overline{\delta _e}] \] 
\[ = \frac{1}{N_{\edgeset}} \sum _e T_N( \alpha _e\overline{\gamma _e} + \beta _e\overline{\delta _e}) 
= \langle R_Nf,R_Ng \rangle _N, \]
which is \eqref{ipform1}.  Similar calculations handle the case $\lambda = 0$.

Suppose $f,g \in E_p(N^2\pi ^2)$, a one dimensional space. For 
$f_e = g_e = \cos( N \pi x)$, we have
\[\frac{1}{N_{\edgeset}} \int_{\graph _{\infty}} \cos ^2(N \pi x) = 1/2, \]
but
\[ \langle R_Nf,R_Ng \rangle _{N} = \frac{1}{N_{\edgeset}} \sum_{e \in \edgeset} T_N(f_e\overline{g_e}) 
= \frac{1}{N_{\edgeset}} \sum_{e \in \edgeset} 1 = 1,\]
giving \eqref{ipform2}.

\end{proof}

For general $\omega $ a variation of a classical trapezoidal rule estimate appears.

\begin{thm} \label{bigthm2}
Suppose $f,g \in E_{\infty}(\lambda )$, with $0 < \lambda < N^2\pi ^2$. 
Then for $\omega \ge \omega _0 > 0$, 
\begin{equation} \label{ipform3}
\Big | \langle R_Nf,R_Ng \rangle _{N} - \langle f, g \rangle \Big | 
 = O( |\omega |^4/N^4) \| f \| \ \| g \|.
\end{equation} 

\end{thm}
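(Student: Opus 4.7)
The strategy is to work edge by edge using the explicit exponential representation \eqref{localform} and then reduce everything to the key identity \eqref{freqresp}. First I would expand, on each edge,
\[
f_e\overline{g_e} = \alpha_e\overline{\gamma_e} + \beta_e\overline{\delta_e} + \alpha_e\overline{\delta_e}\, e^{2i\omega x} + \beta_e\overline{\gamma_e}\, e^{-2i\omega x}.
\]
The two constant pieces are preserved exactly by both $T_N$ and $\int_0^1$, so they cancel in the difference. To the two oscillatory pieces I would apply \eqref{freqresp} at frequency $\pm 2\omega$, which gives
\[
T_N(e^{\pm 2i\omega x}) - \int_0^1 e^{\pm 2i\omega x}\,dx = \Bigl(M_0\!\Bigl(\tfrac{\omega}{N}\Bigr) - 1\Bigr)\int_0^1 e^{\pm 2i\omega x}\,dx.
\]

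Using \eqref{Nip} and averaging over edges then yields the compact representation
\[
\langle R_N f, R_N g \rangle_N - \langle f, g\rangle = \Bigl(M_0\!\Bigl(\tfrac{\omega}{N}\Bigr) - 1\Bigr)\cdot \frac{1}{N_{\edgeset}}\sum_{e} \Bigl[\alpha_e\overline{\delta_e}\!\int_0^1 e^{2i\omega x}\,dx + \beta_e\overline{\gamma_e}\!\int_0^1 e^{-2i\omega x}\,dx\Bigr].
\]
So the problem decouples into estimating the scalar prefactor and the edge-indexed sum independently.

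For the prefactor I would Taylor-expand $M_0(z) = z\cot z = 1 - z^2/3 - z^4/45 - \cdots$, which is analytic on $(-\pi,\pi)$. Since $\omega/N < \pi$ by hypothesis, $M_0(\omega/N) - 1$ is controlled by its leading terms in $\omega/N$, uniformly on compact subsets of $(-\pi + \varepsilon, \pi - \varepsilon)$. For the sum, I would apply Cauchy--Schwarz against the norm identity $\|f\|^2 = \frac{1}{N_{\edgeset}}\sum_e \int_0^1 |f_e|^2 = \frac{1}{N_{\edgeset}}\sum_e [|\alpha_e|^2 + |\beta_e|^2 + 2\mathrm{Re}(\alpha_e\overline{\beta_e}\int_0^1 e^{2i\omega x}\,dx)]$: the oscillatory cross term is bounded by $2|\alpha_e||\beta_e||\sin\omega|/\omega$, so for $\omega \ge \omega_0 > 0$ one obtains the norm equivalence $\sum_e (|\alpha_e|^2 + |\beta_e|^2) \asymp N_{\edgeset}\|f\|^2$, and similarly for $g$. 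Combined with $|\int_0^1 e^{\pm 2i\omega x}\,dx| \le |\sin\omega|/\omega$ this controls the edge sum by $\|f\|\|g\|/\omega$ up to a constant.

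The main obstacle is the careful bookkeeping of the $\omega$-dependence: matching the $O((\omega/N)^2)$ behavior of the prefactor against the $O(1/\omega)$ decay of the oscillatory integrals and the $\omega$-dependent norm equivalence to arrive at the claimed bound. This is the step where a naive application of the above estimates falls short of $O(\omega^4/N^4)\|f\|\|g\|$, and to close the gap I would expand $M_0(\omega/N) - 1$ past its leading $-\omega^2/(3N^2)$ term and exploit that the leading-order contribution can itself be absorbed into $\langle f,g\rangle$ by recognizing it as proportional to the cross-term piece of the inner product, leaving a genuine $O((\omega/N)^4)$ remainder. Making this cancellation precise is the delicate calculation that must be carried out.
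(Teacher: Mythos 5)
Your reduction is correct up to and including your ``compact representation'': the constant parts of $f_e\overline{g_e}$ are reproduced exactly by $T_N$, and \eqref{freqresp} applied at frequency $2\omega$ gives
\[
\langle R_Nf,R_Ng\rangle_N - \langle f,g\rangle
=\Bigl(M_0\bigl(\tfrac{\omega}{N}\bigr)-1\Bigr)\,
\frac{1}{N_{\edgeset}}\sum_e\Bigl[\alpha_e\overline{\delta_e}\int_0^1 e^{2i\omega x}\,dx
+\beta_e\overline{\gamma_e}\int_0^1 e^{-2i\omega x}\,dx\Bigr].
\]
You also correctly diagnose that $M_0(\omega/N)-1=O(\omega^2/N^2)$ alone falls short. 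But the mechanism you propose to close the gap --- ``absorbing'' the leading term into $\langle f,g\rangle$ because it is proportional to the cross-term piece of the inner product --- is not a legitimate move: the theorem bounds the exact difference, and the cross-term piece of $\langle f,g\rangle$ is in general only $O(\|f\|\,\|g\|/\omega)$, so multiplying it by $\omega^2/(3N^2)$ yields $O(\omega/N^2)\,\|f\|\,\|g\|$, which is weaker than the claimed $O(\omega^4/N^4)\,\|f\|\,\|g\|$.

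The missing idea is a global cancellation forced by the vertex conditions. Put $\psi=f\overline{g}$; since $f$ and $g$ are continuous at the vertices and each satisfies the Kirchhoff condition \eqref{derive}, so does $\psi$, hence $\sum_e[\psi_e'(1)-\psi_e'(0)]=-\sum_v\sum_{e\sim v}\partial_\nu\psi_e(v)=0$. A direct computation gives $\psi_e'(1)-\psi_e'(0)=-4\omega^2\bigl[\alpha_e\overline{\delta_e}\int_0^1e^{2i\omega x}+\beta_e\overline{\gamma_e}\int_0^1e^{-2i\omega x}\bigr]$, so the edge sum in your representation is controlled by exactly this vanishing quantity. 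The paper uses the same identity in the equivalent form of the classical trapezoidal correction: it writes $1-M_0(z)=z^2/3+M_1(z)$ with $M_1(z)=1-z\cot z-z^2/3=O(z^4)$, identifies the $z^2/3$ contribution with $-[\psi_e'(1)-\psi_e'(0)]/(12N^2)$, kills it by summing over edges, and then bounds the $M_1$ remainder by Cauchy--Schwarz together with $\int_0^1|f_e|^2\ge(1-\tfrac{\sin\omega}{\omega})(|\alpha_e|^2+|\beta_e|^2)$ --- which is precisely where the hypothesis $\omega\ge\omega_0>0$ enters, matching your norm-equivalence step. Without the vertex-condition identity your argument does not reach the stated bound; with it, your own representation in fact shows the difference vanishes for $0<\lambda<N^2\pi^2$, but that identity is the one step you left unproved and misattributed.
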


\begin{proof}

The argument starts with a variation of standard error estimates 
\cite[p. 285]{Atkinson}, [p. 358-369]\cite{Briggs} for the trapezoidal rule for integration.
If $\phi = \exp(i\omega x)$ then
\[\frac{\phi '(1) - \phi '(0)}{12N^2} = \frac{1}{12N^2} \int_0^1 \phi '' 
= \frac{-\omega ^2}{12N^2} \int_0^1 e^{i\omega x}. \]
Putting this together with \eqref{freqresp} yields
\[\int_0^1 e^{i\omega x} 
- T_N(e^{i\omega x}) = - \frac{\phi '(1) - \phi '(0)}{12N^2} + 
M_1(\frac{\omega }{2N}) \int_0^1 e^{i\omega x}, \]
with
\[ M_1(z) = 1 - z\cot (z) - \frac{z^2}{3}.\] 
A Taylor expansion gives 
\[ M_1(z) = O(z^4), \]
and the function $M_1(z) = M_1(-z)$ is analytic for $|z| < \pi$.

On each edge we have the representation \eqref{localform}.
Since $f$ and $g$ are in the domain of $\Delta _{\infty }$, 
the function $\psi = f\overline{g}$ satisfies the vertex conditions 
\eqref{continuity} and \eqref{derive}.  Thus  
\[\sum_e [ \psi _e'(1) - \psi _e'(0) ]  
= - \sum_{v \in \vertexset} \sum_{e \sim v } \partial _{\nu} \psi _e(v) = 0, \]
and
\[N_{\edgeset} [\langle f, g \rangle - \langle R_Nf,R_Ng \rangle _{N} ] 
= \sum_e \Bigl [ \int_0^1 \psi _e - T_N(\psi_e) \Bigr ] \]
\[= - \sum_e \Bigl [ \frac{\psi _e'(1) - \psi _e'(0)}{12N^2} \Bigr ]  
 + M_1(\frac{\omega}{N})\Bigl [\int_0^1 e^{2i\omega x} \sum_e \alpha _e\overline{\delta _e}  
+ \int_0^1 e^{-2i\omega x} \sum_e \beta _e\overline{\gamma _e} \Bigr ] \]
\[ = M_1(\frac{\omega}{N})\Bigl [\int_0^1 e^{2i\omega x} \sum_e \alpha _e\overline{\delta _e}  
+ \int_0^1 e^{-2i\omega x} \sum_e \beta _e\overline{\gamma _e} \Bigr ] \]

Using $2|\alpha _e||\beta _e| \le |\alpha _e|^2 + |\beta _e|^2$, we find 
\[\int_0^1 |f_e|^2 = |\alpha _e|^2 + |\beta _e|^2 + \alpha \overline{\beta} e^{i\omega }\frac{\sin(\omega )}{\omega }
+ \beta \overline{\alpha} e^{-i\omega }\frac{\sin(\omega )}{\omega } \]
\[ \ge (1 - \frac{\sin(\omega )}{\omega })(|\alpha _e|^2 + |\beta _e|^2) .\]
This gives the desired result
\[ N_{\edgeset} \Big | \langle R_Nf,R_Ng \rangle _{N} - \langle f, g \rangle \Big | \]
\[ \le  \Big | M_1(\frac{\omega }{N}) (1- \frac{\sin(\omega )}{\omega })^{-1} 
2 \sum_e (\int_0^1 |f_e|^2)^{1/2} \int_0^1 |g_e|^2)^{1/2} \Big | \]
\[ \le  \Big | M_1(\frac{\omega }{N}) (1- \frac{\sin(\omega )}{\omega })^{-1} 
2 (\int_{\graph _{\infty}} |f|^2)^{1/2} (\int_{\graph _{\infty}} |g|^2)^{1/2} \Big | .\]

\end{proof}

\section{The FFT}

In this section we turn to an algorithm
for the efficient computation of the Fourier transform and its inverse transform on the spaces
$\vertspace _N$.  

\vskip 5pt
\centerline{\includegraphics{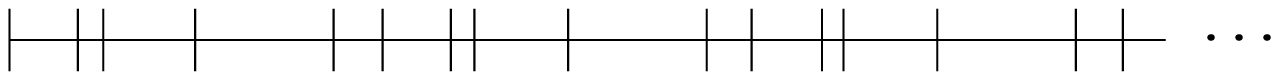}}
\[ 0 \hskip 47pt \pi \hskip 44pt 2\pi \hskip 41pt 3\pi \hskip 40pt 4\pi \hskip 42pt 5\pi \hskip 77pt \sqrt{\lambda } \]
\centerline{Figure 4.1: Square root of the spectrum of $\Delta _{\infty}$.}
\vskip 5pt

Recall some of the structures associated with the spectrum of $\Delta _{\infty}$.
The square roots $\sqrt{\lambda _k}$ of the positive eigenvalues of $\Delta _{\infty}$
and the eigenspaces $E(\lambda _k)$ exhibit the '$2\pi$ -periodicity' discussed in \propref{specper}. 
and illustrated in Figure 4.1.
Let $0 < \omega _{0,1} < \omega _{0,2} < \dots \le 2\pi $ denote the square roots of the $K$ distinct eigenvalues
$\lambda _k$ of $\Delta _{\infty}$ with $0 < \lambda _k \le (2\pi)^2$.  
For each $\omega _{0,k}$ pick an orthonormal basis $\Psi (\omega _{0,k})$ for $E(\omega _{0,k}^2)$, with
basis functions $\Psi _j(\omega _{0,k})$, $j = 1,\dots ,{\rm dim} E(\omega _{0,k}^2)$.

\[ \begin{matrix} 
  & \omega _{0,k} & & \omega _{1,k} & & \omega _{2,k} & & \omega _{3,k} & \cr
\cr
2 \pi  & \bullet & <---> & \bullet & <--->  & \bullet & <---> & \bullet & <--> \cr
  & \bullet & <---> & \bullet & <--->  & \bullet & <---> & \bullet & <--> \cr
\pi  & \bullet & <---> & \bullet & <--->  & \bullet & <---> & \bullet & <--> \cr
  & \bullet & <---> & \bullet & <--->  & \bullet & <---> & \bullet & <--> \cr
  & \bullet & <---> & \bullet & <--->  & \bullet & <---> & \bullet & <--> \cr
0  & \bullet  & &  & m  &  &  &  &  
\end{matrix}
\]
\centerline{Figure 4.2: Eigenfunction maps for $\Delta _{\infty}$.}
\vskip 5pt

For $m = 1,2,3,\dots $, let $\omega _{m,k} = \omega _{0,k} + 2m\pi $.
From each basis $\Psi (\omega _{0,k})$ we may produce bases $\Psi (\omega _{m,k})$
for $E(\omega _{m,k}^2)$ by the method of \propref{specper}. 
Recall that on each edge $e$ a function $\Psi _j \in \Psi (\omega _{0,k})$ 
has the form 
\[\Psi _j = A_e\exp(i\omega _{0,k} x) + B_e \exp(-i \omega _{0,k} x) , \quad 0 \le x\ \le 1. \]
The set of functions $\Psi _j(\omega _{m,k})$ whose values on $e$ are 
\begin{equation} \label{psiform}
\Psi _j(\omega _{m,k}) = A_e\exp(i[\omega _{0,k} +2m\pi ]x) + B_e\exp (-i[\omega _{0,k}  + 2m\pi ]x) ,  
\end{equation}
then give a basis $\Psi (\omega _{m,k})$ for $E([\omega _{0,k}  + 2m\pi ]^2)$.
This basis need not be orthonormal.  Figure 4.2 indicates the relations among the 
bases $\Psi (\omega _{m,k})$.

\begin{prop} \label{orthcases}
If $\omega _{0,k}  \in \{ \pi , 2\pi \}$, then the functions $\Psi _j(\omega _{m,k})$ 
are orthonormal, as are the functions $R_N\Psi _j(\omega _{m,k}) \in \vertspace _N$ for
$ 0 \le \omega _{m,k} < N \pi  $. 
\end{prop}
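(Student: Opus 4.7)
The plan is to exploit the fact that the construction of $\Psi_j(\omega_{m,k})$ from $\Psi_j(\omega_{0,k})$ via \propref{specper} preserves the edge coefficients $A_e,B_e$ exactly, merely replacing the frequency $\omega_{0,k}$ by $\omega_{m,k}=\omega_{0,k}+2m\pi$. The key is to show that the two inner products in question reduce to an identical expression in these coefficients, and then invoke the orthonormality of the base case.

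First I would write $\Psi_j(\omega_{m,k})_e \overline{\Psi_l(\omega_{m,k})_e}$ explicitly as
\[
A_e^j\overline{A_e^l}+B_e^j\overline{B_e^l}+A_e^j\overline{B_e^l}e^{2i\omega x}+B_e^j\overline{A_e^l}e^{-2i\omega x},\quad \omega=\omega_{m,k},
\]
so that the inner product calculations reduce to controlling the cross terms $e^{\pm 2i\omega x}$. Under the hypothesis $\omega_{0,k}\in\{\pi,2\pi\}$ we have $\omega=\omega_{0,k}+2m\pi\in\pi\Z_{>0}$, whence $2\omega$ is a nonzero integer multiple of $2\pi$ and $\int_0^1 e^{\pm 2i\omega x}\,dx=0$. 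Consequently, for every $m$,
\[
\langle \Psi_j(\omega_{m,k}),\Psi_l(\omega_{m,k})\rangle_{\infty}=\frac{1}{N_{\edgeset}}\sum_e\bigl(A_e^j\overline{A_e^l}+B_e^j\overline{B_e^l}\bigr),
\]
which is independent of $m$ and therefore equals $\langle \Psi_j(\omega_{0,k}),\Psi_l(\omega_{0,k})\rangle_\infty=\delta_{jl}$. This gives the $L^2(\graph_\infty)$ orthonormality.

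For the vertex inner product I would apply \eqref{Nip} and then \eqref{freqresp}: the cross-term contributions are governed by $T_N(e^{\pm 2i\omega x})=M_0(\omega/N)\int_0^1 e^{\pm 2i\omega x}$, and the integral again vanishes provided $M_0(\omega/N)$ is finite, i.e.\ $\omega/N\notin\pi\Z$. The hypothesis $0<\omega_{m,k}<N\pi$ places $\omega/N$ in the open interval $(0,\pi)$, where $z\cot z$ is analytic, so no pole is encountered. Thus the cross terms again drop out and
\[
\langle R_N\Psi_j(\omega_{m,k}),R_N\Psi_l(\omega_{m,k})\rangle_N=\frac{1}{N_{\edgeset}}\sum_e\bigl(A_e^j\overline{A_e^l}+B_e^j\overline{B_e^l}\bigr)=\delta_{jl}.
\]

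The main (modest) obstacle is checking that the assumption $0<\omega_{m,k}<N\pi$ is exactly what is needed to keep $\omega/N$ away from the poles of $M_0$, so that the trapezoidal-rule identity \eqref{freqresp} may legitimately be invoked to conclude that the sampled cross terms vanish; the algebraic part of the argument is then immediate. Everything else is a direct transfer of orthonormality from the base case $m=0$ through coefficient-level identities.
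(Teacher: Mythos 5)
Your proposal is correct and takes essentially the same route as the paper: for the continuous inner product you expand $\Psi_j\overline{\Psi_l}$ edgewise and kill the cross terms because $2\omega_{m,k}$ is a nonzero multiple of $2\pi$, reducing everything to the $m$-independent coefficient sum $\frac{1}{N_{\edgeset}}\sum_e\bigl(A_e^j\overline{A_e^l}+B_e^j\overline{B_e^l}\bigr)$, exactly as in the paper's proof. For the sampled functions the paper simply invokes Theorem~\ref{bigthm1} (equation \eqref{ipform1}), whose proof is precisely your trapezoidal-sum computation via \eqref{Nip} showing $T_N(e^{\pm 2i\omega x})=0$ for $0<\omega<N\pi$, so your direct verification is just an inlined version of the same argument.
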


\begin{proof}
Suppose $\Psi _i(\omega _{m.k}) \in \Psi (\omega _{m,k})$ is represented on the edge $e$ by
\[\Psi _i(\omega _{m,k}) = C_e\exp(i[\omega _{0,k} +2m\pi ]x) + D_e\exp (-i[\omega _{0,k}  + 2m\pi ]x) , \quad 0 \le x\ \le 1, \]
and \eqref{psiform} describes $\Psi _j$.  With $\omega _{0,k} = \pi , 2\pi ,$
\[ \int_0^1 \exp(\pm 2i[\omega _{0,k}  +2m\pi ]x) = 0,\]
so
\[ \sum_e \int_0^1 \Psi _j(\omega_{m,k})\overline{\Psi _i(\omega _{m,k})} 
 =  \sum_e [A_e \overline C_e + B_e \overline D_e] 
=  \sum_e \int_0^1 \Psi _j(\omega _{0,k}) \overline{\Psi _i(\omega _{0,k})}.\]

The extension to $\vertspace _N$ follows from \thmref{bigthm1}.
\end{proof}

Let $I_0 = {\rm dim}E(0)$, $I_k = {\rm dim}E(\omega _{0,k}^2)$, 
and let
\[ \Phi _j(\omega _{m,k})  = R_N\Psi _j(\omega _{m,k}) \]
be the restriction of $\Psi _j(\omega _{m,k})$ to the vertices of $\graph _N$. 
The bases $\Phi (\omega _{k,m})$ will be used for 
efficient Fourier transform algorithms 
\cite[p. 182]{Atkinson} \cite[p. 383]{Briggs}.
Define the discrete Fourier transforms (DFT)
$ \ft _N: \vertspace _N \to \complex ^{I_0} \oplus _{m,k} \complex ^{I_k} $
for the continuous graph $\graph _{\infty}$
by taking the inner product of a function $f \in \vertspace _N$
with the bases $\Phi (\omega _{m,k})$, 
\begin{equation} \label{DFT}
\ft _N(f) = \{ \langle f, \Phi _j(\omega _{m,k}) \rangle _N \} ,\quad \omega _{m,k} \le N\pi .
\end{equation}
The condition $\omega _{m,k} \le N\pi $ amounts to $m = 0,\dots , N/2 -1$ when $N$ is even,
which will hold in the cases of interest below.
Except for the cases noted in \propref{orthcases}
the functions of $\Phi (\omega _{m,k})$ may not be orthonormal, 
so the Fourier transform will not be an isometry from 
$\Phi (\omega _{m,k})$ to $\complex ^{I_k}$ if the usual inner product is used on the range.
We consider a modified inner product.  

Suppose $B(\omega _{m,k}) = (b_{ij})$ is a matrix taking the basis $\Phi (\omega _{m,k})$
to an orthonormal basis 
\begin{equation} \label{cob}
\eta _i = \sum_j b_{ij}\Phi _j(\omega _{m,k}) 
\end{equation}
for the same eigenspace in $\vertspace _N$. 
(Such a matrix may be obtained by the Gram-Schmidt process.) 
For $f$ in the span of $\Phi (\omega _{m,k})$, the map
$f \to \{ \langle f, \eta _i \rangle _N \}$
is an isometry to $\complex ^{I_k}$ with the usual inner product
$X \bullet Y = \sum x_j\overline{y_j}$.  
Expressing this in the original basis, 
\[ \| f \| ^2 = \sum_i | \langle f, \eta _i \rangle _N|^2
= \sum_i |\langle f, \sum_j b_{i,j} \Phi _j  \rangle _N |^2 
 = \sum_i  | \sum_j \overline{b_{i,j}} \langle f, \Phi _j  \rangle _N |^2 \]
\[ = \overline{B}X\bullet \overline{B}X, \quad 
X = ( \langle f, \Phi _1 \rangle _N , \dots ,\langle f, \Phi _{I_k} \rangle _N )^T. \]

For $f = \sum_j c_j\Phi _j(\omega _{m,k})$ in the span of $\Phi (\omega _{m,k})$, the coefficients $c_j$ 
may be recovered from the DFT values $\langle f, \Phi _j \rangle _N $.
Starting from 
\[f = \sum_i a_i\eta _i , \quad a_i = \langle f, \eta _i \rangle _N ,\]
and using \eqref{cob} we find
\[ f = \sum_i (\sum_j \overline{b_{ij}}\langle f,\Phi _j \rangle _N ) (\sum_l b_{il}\Phi _l ) 
 = \sum_l \sum_j (\sum_i \overline{b_{ij}} b_{il} \langle f,\Phi _j  \rangle _N)  \Phi _l  \]
\[ = \sum_l ( \sum_j (B^*B)_{jl} \langle f,\Phi _j \rangle _N ) \Phi _l , \]
or
\begin{equation} \label{cval}
c_l =  \sum_j (B^*B)_{jl} \langle f,\Phi _j \rangle _N . 
\end{equation}

Calling $\overline{B}X\bullet \overline{B}X$ the $B(\omega _{m,k})$ inner product 
on $\complex ^{I_k}$, and noting $\Phi (\omega _{m,k})$ is a basis for the
$\mu _{m,k}$ eigenspace of $\Delta _N$, we obtain the first part of the next result. 

\begin{thm}
The Fourier transform $ \ft _N: \vertspace _N \to \complex ^{I_0} \oplus _{m,k} \complex ^{I_k} $
satisfies
\[\ft _N(\Delta _Nf) = \{ \mu _{m,k}\ft (f)_{m,k} \}, \]
and is an isometry if $\complex ^{I_k}_m$ has the $B(\omega _{m,k})$ inner product. 

If $N$ is a power of $2$, then
$\ft _N(f)$ can be computed in time $O(N\log_2(N))$.  
\end{thm}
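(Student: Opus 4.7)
The plan is to handle the three assertions in order; the first two are essentially immediate from the structural results already proved, and the $O(N\log_2 N)$ complexity bound is the real content.

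For the eigenvalue relation, each basis function $\Phi_j(\omega_{m,k})$ lies in the $\mu_{m,k}$-eigenspace $E_N(\mu_{m,k})$ of $\Delta_N$ by \propref{bigprop1}. Combined with the self-adjointness of $\Delta_N$ with respect to $\langle\cdot,\cdot\rangle_N$ established just above \propref{bigprop1}, this gives
\[\langle \Delta_N f, \Phi_j(\omega_{m,k}) \rangle_N = \mu_{m,k}\langle f, \Phi_j(\omega_{m,k}) \rangle_N,\]
which is exactly $\ft_N(\Delta_N f)_{m,k} = \mu_{m,k}\ft_N(f)_{m,k}$. For the isometry, \propref{bigprop1} says the eigenspaces $E_N(\mu_{m,k})$ for $\omega_{m,k}\le N\pi$ are mutually orthogonal and together span $\vertspace_N$. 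Decomposing $f = \sum_{m,k} f_{m,k}$ orthogonally yields $\|f\|_N^2 = \sum_{m,k}\|f_{m,k}\|_N^2$; since $\Phi_j(\omega_{m,k})\in E_N(\mu_{m,k})$ we have $\langle f,\Phi_j(\omega_{m,k})\rangle_N = \langle f_{m,k},\Phi_j(\omega_{m,k})\rangle_N$, and the calculation displayed just before the theorem identifies $\|f_{m,k}\|_N^2$ with the $B(\omega_{m,k})$-norm of the $\complex^{I_k}$-vector $\ft_N(f)_{m,k}$. Summing over $(m,k)$ proves the isometry.

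The main step is the complexity bound. The key is to reduce the computation, on each edge and each base frequency, to a classical length-$N$ DFT. Fix an edge $e$ of $\graph_1$ identified with $[0,1]$, so the interior vertices of $\graph_N$ on $e$ are $x_n = n/N$, $n = 1,\dots,N-1$. By \eqref{psiform},
\[\overline{\Phi_j(\omega_{m,k})(x_n)} = \overline{A_e}\,e^{-i\omega_{0,k}n/N}\,e^{-2\pi i mn/N} + \overline{B_e}\,e^{i\omega_{0,k}n/N}\,e^{2\pi i mn/N},\]
with coefficients $A_e,B_e$ depending on $(e,j,k)$ but not on $m$ or $n$; they are obtained once from the base-graph spectral problem on $\graph_1$, an $O(1)$ precomputation in $N$. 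Moreover, at an original vertex $v$ of $\graph_1$ the factor $e^{\pm 2\pi i m}$ is $1$, so each original vertex contributes to $\langle f,\Phi_j(\omega_{m,k})\rangle_N$ an $m$-independent quantity costing only $O(N_\vertexset)$ per $(j,k)$. The edge-$e$ interior contribution is a fixed linear combination of the two sums
\[\widehat{h^{\pm}_{e,j,k}}(m) = \sum_{n=0}^{N-1} h^{\pm}_{e,j,k}(n)\,e^{\mp 2\pi i mn/N}, \quad h^{\pm}_{e,j,k}(n) = f_e(x_n)\,e^{\mp i\omega_{0,k}n/N}.\]
Each chirped signal $h^{\pm}_{e,j,k}$ is formed in $O(N)$, and its full DFT for all $m = 0,\dots,N-1$ is obtained in $O(N\log_2 N)$ by the classical radix-$2$ FFT (this is where $N$ being a power of $2$ is used). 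Combining the edge DFTs with the $A_e,\overline{A_e},B_e,\overline{B_e}$ coefficients and the boundary terms is $O(N)$ per $(j,k)$. Since the total number of triples $(e,j,k)$ is $N_\edgeset \sum_k I_k$, a quantity depending only on $\graph_1$ and not on $N$, the total work is $O(N\log_2 N)$.

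The main obstacle is essentially bookkeeping rather than any deep difficulty: one must recognize that all $N$-dependence in $\Phi_j(\omega_{m,k})(x_n)$ factors, edge by edge, through the classical twiddle factor $e^{\mp 2\pi i mn/N}$ once the $\omega_{0,k}$-piece is absorbed into a one-time modulation of the input, and that the coefficients $A_e, B_e$ and basis dimensions $I_k$ are fixtures of the base graph $\graph_1$ independent of $N$. With those observations in place, the standard radix-$2$ FFT delivers the stated complexity.
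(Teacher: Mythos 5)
Your proposal is correct and follows essentially the same route as the paper: the eigenvalue relation and isometry come from self-adjointness of $\Delta_N$, the orthogonality of its eigenspaces, and the $\overline{B}X\bullet\overline{B}X$ computation displayed before the theorem, while the complexity bound is obtained exactly as in the paper by splitting the inner product into per-edge trapezoidal sums, absorbing the base frequency $\omega_{0,k}$ into a one-time modulation of the samples $f_e(x_n)$, and applying the classical radix-$2$ FFT to the resulting twiddle-factor sums, with the number of triples $(e,j,k)$ fixed independently of $N$. Your treatment is somewhat more explicit about the endpoint weights in $T_N$ and the $m$-independence of the original-vertex contributions, but the argument is the same.
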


\begin{proof}

The inner products of \eqref{DFT} may be split into trapezoidal sums for the edges as in \eqref{Nip}, 
\[ \ft _N(f) = \frac{1}{N_{\edgeset}} \sum_e T_N(f_e \overline{\Phi _j(\omega _{m,k})}) .\]
Using the representation \eqref{psiform} we have
\[T_N(f_e \overline{[A_e \exp (i[\omega _{0,k} + 2m\pi ]x) + B_e \exp (-i[\omega _{0,k} + 2m\pi ]x)]}) \]
\[= \overline{A_e} T_N(\exp(-i\omega _{0,k}x)f_e \exp (-2m\pi ix) 
+ \overline{B_e}T_N(\exp(i\omega _{0,k}x)f_e \exp (2m\pi ix)) . \]

If $N$ is a power of $2$,
$e$ and $k$ are fixed, and $m = 0,\dots ,\frac{N}{2}-1$, then  
the sequence $T_N(\exp(\pm i\omega _{0,k}x)f_e(x) \exp (\pm 2m\pi ix))$
differs trivially from the conventional discrete Fourier transform
of the sampled data $f_e(x_n)$,  
which may be computed \cite[p. 182]{Atkinson} \cite[p. 383]{Briggs} in time $O(N\log_2(N))$. 
Since the number of edges $e$ and indices $k$ is fixed, we obtain the result.

\end{proof}

It remains to consider the inverse transform.  
Since the matrix $B^*B$ of \eqref{cval} depends only on the graph and functions 
$\Phi _j(\omega _{m,k})$, these matrices are assumed to be precomputed.  
Suppose the DFT sequence \eqref{DFT} of a function $f$ is given.  
Since the computations in \eqref{cval} are limited to sequence blocks $\complex ^{I_k}$,
the coefficients $c_j(\omega _{m,k})$ for all $\omega _{m,k}$ may be computed in time $O(N)$.
It remains to compute
\[ f(x_n) = \sum_{m,k} c_j(\omega _{m,k})\Phi _j(\omega _{m,k})(x_n). \]

Fixing the edge $e$, the primitive frequency index $k$, and the basis index $j$, 
and taking advantage of \eqref{psiform},
the remaining sum has the form
\[ f_{e,k,j}(x_n) = A_e \exp(i\omega _{0,k}n/N) \sum_m c_j(\omega _{m,k}) \exp(i2m\pi n/N) \]
\[ + B_e \exp(-i\omega _{0,k}n/N) \sum_m c_j(\omega _{m,k}) \exp(-i2m\pi n/N) .\]
These sums may be computed with a conventional FFT.
Since the number of edges $e$
and primitive frequency indices $k$ are constant, 
and the range of basis indices $j$ is bounded independent of $m$,
the next result is obtained.

\begin{thm}
If $N$ is a power of $2$, then
the inverse DFT can be computed in time $O(N\log_2(N))$.  
\end{thm}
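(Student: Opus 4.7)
The plan is to invert the forward transform in two stages: first recover the eigenspace expansion coefficients $c_j(\omega_{m,k})$ from the given DFT values $\langle f,\Phi_j(\omega_{m,k})\rangle_N$, and then synthesize the vertex values $f(x_n)$ from these coefficients by a classical FFT on each edge.

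For the first stage I would appeal to \eqref{cval}. Since the matrices $B^*B$ depend only on the graph and on the fixed bases $\Phi(\omega_{m,k})$, the hypothesis permits treating them as precomputed. Each application of $B^*B$ acts on a block of size $I_k = \dim E(\omega_{0,k}^2)$, which is bounded independently of $N$, and there are $O(N)$ frequencies $\omega_{m,k} \le N\pi$, so all coefficients $c_j(\omega_{m,k})$ are obtained in time $O(N)$ in total.

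For the second stage I would fix an edge $e$, a primitive frequency index $k$, and a basis index $j$, and use \eqref{psiform} to factor the partial contribution as
\[ f_{e,k,j}(x_n) = A_e e^{i\omega_{0,k}n/N} \sum_m c_j(\omega_{m,k}) e^{i2m\pi n/N} + B_e e^{-i\omega_{0,k}n/N} \sum_m c_j(\omega_{m,k}) e^{-i2m\pi n/N}, \]
exactly as in the display preceding the theorem statement. The key observation, and the only real obstacle, is to recognize that for each triple $(e,k,j)$ the two inner sums over $m = 0,\dots,N/2-1$, evaluated at $n = 0,\dots,N-1$, are length-$N$ inverse DFTs of the sequence $\{c_j(\omega_{m,k})\}$ once the entries for $m \ge N/2$ are padded with zeros; when $N$ is a power of $2$ these are computed by a conventional FFT in time $O(N\log_2 N)$. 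The outer multiplications by the twists $A_e e^{\pm i\omega_{0,k}n/N}$ contribute only $O(N)$ per triple.

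Finally, since the number of edges $e$, the number of primitive frequency indices $k$, and the basis dimensions $I_k$ are all bounded independently of $N$, summing over all triples $(e,k,j)$ preserves the $O(N\log_2 N)$ bound, which absorbs the linear cost of stage (i) and yields the claimed total.
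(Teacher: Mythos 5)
Your proposal is correct and follows essentially the same route as the paper: recover the coefficients $c_j(\omega_{m,k})$ blockwise from \eqref{cval} in time $O(N)$ using the precomputed matrices $B^*B$, then synthesize $f(x_n)$ edge by edge via the twisted sums coming from \eqref{psiform}, each of which is a conventional FFT of length $N$. Your explicit remark about zero-padding the coefficient sequence for $m \ge N/2$ is a small but welcome clarification that the paper leaves implicit.
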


\section{A family of examples}

A relatively simple family of examples is provided by  
the complete bipartite graphs $K(m,2)$ on $m,2$ vertices.
The $m=4$ case is portrayed in Figure 5.1.  
Discussion of these graphs is facilitated if the vertices are colored,
the $2$ vertices of degree $m$ being red and the remaining $m$ vertices blue.

Suppose $v$ is a vertex, and $\{ u_i \} $ are the vertices adjacent to $v$.
Then the operator $\Delta _1$ 
on the $m+2$ dimensional vertex space is given by
\[ \Delta _1 f(v) = f(v) - \frac{1}{deg(v)}\sum_{i = 1}^{deg(v)} f(u_i). \]
Eigenvalues $\mu $ for $\Delta _1$ are $0,1,2$, with $1$ having multiplicity $m$.  
The eigenfunctions $\psi $ for $\mu = 0 ,2$ are 
\[ \mu _0 = 0, \quad \psi _0(v) = 1, \] 
\[ \mu _{m+1} = 2, \quad 
\psi _{m+1}(v) = \Bigl \{ \begin{matrix} 1, \quad v {\ \rm red} \cr 
-1, \quad v {\ \rm blue}  
\end{matrix} \Bigr \} \] 

\vskip 5pt
\centerline{\includegraphics{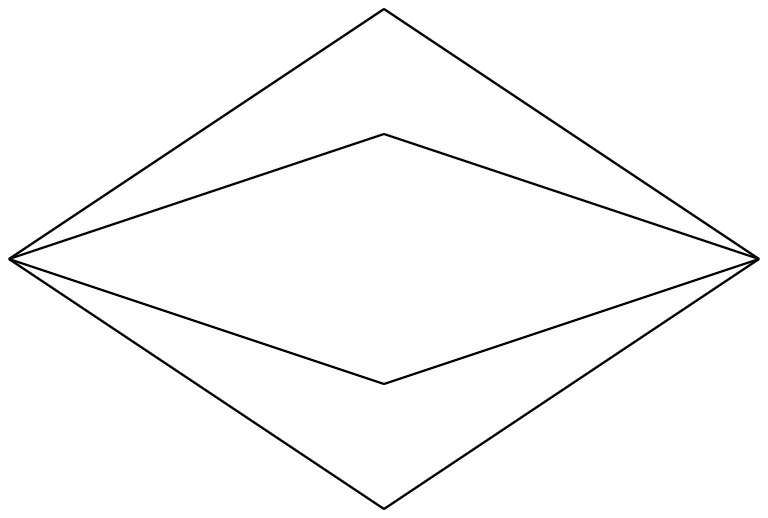}}
\centerline{Figure 5.1: The graph $K_{4,2}$}
\vskip 5pt

Next, consider the eigenfunctions for $\mu = 1$.
A function $f(v)$ will satisfy $\Delta  _1f(v) = f(v)$
if and only if $\sum_{i = 1}^{deg(v)} f(u_i) = 0$ for every vertex $v$.
Number the red vertices $r_1$ and $r_2$, and the two blue vertices $b_1,\dots ,b_m$.
For $j = 1,\dots ,m-1$, independent eigenfunctions are
\[ \psi _{j}(v) = \Bigl \{ \begin{matrix} 1, \quad v = b_j \cr 
-1, \quad v = b_{j+1}, \cr
 0, \quad {\rm otherwise}  
\end{matrix} \Bigr \} .\]
There is one additional independent eigenfunction 
\[ \psi _{m}(v) = \Bigl \{ \begin{matrix} 1, \quad v = r_1, \cr 
-1, \quad v = r_2, \cr
 0, \quad {\rm otherwise}  
\end{matrix} \Bigr \} .\]
To establish the independence of $\psi _1,\dots ,\psi _m$, note that if
\[\sum_{j=1}^m c_j\psi _j(v) = 0,\]
then $c_m = 0$ by evaluation at $r_1$, and evaluation at the points $b_j$ leads
to equations
\[c_1 = 0,c_1 = c_2,\dots ,c_{m-1} = c_m.\]

Moving to the continuous graph, identify each edge with $[0,1]$ 
so the blue vertex is identified with $0$. 
Consider the eigenvalues $\lambda $ and corresponding eigenfunctions 
$\Psi $ for $\Delta _\infty $,  
focusing on $\sqrt{\lambda } \in [0,2\pi ]$.  
Of course we have $\lambda _0 = 0$ with the constant eigenfunction $\Psi (0) = 1$.  
From $\mu _j = 1$ we obtain eigenvalues
\[ \sqrt{\lambda } = \cos^{-1}(1 - \mu _j) = \cos^{-1}(0) =
\pi /2, 3\pi /2 .\]
That is, $\lambda = (\pi /2)^2$ and $\lambda = (3\pi /2)^2$ are eigenvalues,
each with multiplicity $m$.  The corresponding eigenfunctions 
$\Psi _j(\pi /2)$ and $\Psi _j(3\pi /2)$  
are obtained by extrapolation as in \lemref{endmap} from the vertex values of $\psi _j$.

This graph \cite[p. 53]{Bollobas} has $m-1$ independent cycles, all of which are even.
One basis of cycles $C_i$ is given by the vertex sequence $r_1,b_i,r_2,b_{i+1},r_1$.
Each $C_i$ supports eigenfunctions $\Psi _i(\pi)$ and $\Psi _i(2\pi )$ 
which are respectively $\pm \sin (\pi x)$ and $\pm \sin (2\pi x)$ on the edges of the cycle.

Finally, there are two additional independent eigenfunctions, 
$\Psi _m(\pi )$ having values $\cos(\pi x)$ 
and $\Psi _m(2\pi)$ with values $\cos(2\pi x)$ on the edges of $\graph $.
This gives a total of 
\[2N_{\edgeset} = 4m = 2m + 2(m-1) + 2 \]
independent eigenfunctions for $\Delta _{\infty}$ with eigenvalues $0 < \lambda \le 2\pi $.

This graph has an obvious involution in which $r_1$ and $r_2$ are interchanged, 
and we may split the eigenspaces into 
even and odd subspaces with respect to the involution.
Notice that the eigenfunctions $\Psi _m(\pi )$, $\Psi _m(2\pi )$, 
$\Psi _i(\pi /2)$ and $\Psi _i(3\pi /2)$ for $i = 1,\dots ,m-1$ are even 
while $\Psi _m(\pi /2)$, $\Psi _m(3\pi /2)$, $\Psi _i(\pi )$   
and $\Psi _i(2\pi )$ for $i = 1,\dots ,m-1$ are odd.
By restricting to the even, respectively odd, subspaces we may treat the case of a star graph
with $m$ edges and boundary vertices with the conditions $f'_e(0) = 0$, respectively $f_e(0) = 0$.

\vfill \eject

\end{document}